\newcommand{\R}{\mathbb{R}}
\newcommand{\1}{\mathbbm{1}}
\renewcommand{\P}{\mathbb{P}}
\newcommand{\F}{\mathcal{F}}
\newcommand{\E}{\mathbb{E}}
\newtheorem{theorem}{Theorem}
\newtheorem{proposition}[theorem]{Proposition}
\newtheorem{lemma}[theorem]{Lemma}
\newtheorem{corollary}[theorem]{Corollary}
\newtheorem{assumption}[theorem]{Assumption}
\theoremstyle{definition}
\newtheorem{example}[theorem]{Example}
\title{Stochastic Volterra equations: failure of the time-homogeneous Markov property}
\author[1]{Martin Friesen \thanks{Email: martin.friesen@dcu.ie}}
\author[2]{Stefan Gerhold \thanks{Email: sgerhold@fam.tuwien.ac.at}}
\author[2]{Kristof Wiedermann \thanks{Email: kristof.wiedermann@fam.tuwien.ac.at}}
\affil[1]{\small School of Mathematical Sciences, Dublin City University}
\affil[2]{\small Institute of Statistics and Mathematical Methods in Economics, TU Wien}
\date{\today}
\numberwithin{equation}{section}
\numberwithin{theorem}{section}
\begin{document}

\maketitle

\begin{abstract}
\noindent Path-dependence is a defining feature of many real-world systems, with applications ranging from population dynamics to rough volatility models and electricity spot prices. In stochastic Volterra equations (SVEs), such dependence is encoded in the Volterra kernel, which dictates how past trajectories influence present dynamics on infinitesimal time scales. This structure suggests a breakdown of the Markov property. In this article, we develop computational techniques and methods based on small-time asymptotics for SVEs with Hölder coefficients to rigorously establish that they cannot possess the time-homogeneous Markov property. In particular, for affine drifts, we characterise the time-homogeneous Markov property and show that under natural non-degeneracy conditions, it only holds for exponential Volterra kernels. 
\end{abstract}
\vspace{0.2cm}
{\small \textbf{Keywords:} stochastic Volterra equation; path-dependence; Markov property.\vspace{0.2cm}\newline
\textbf{2020 Mathematics Subject Classification:} 60G15; 60G22; 60H20; 60J25.}

\section{Introduction}

In recent years, stochastic Volterra equations (SVEs) have become a highly active area of research, with a major driving force coming from the rapidly developing field of rough volatility models (see \cite{rvol, pricingroughvol, volisrough}). These models offer remarkable flexibility as they can reproduce the empirically observed roughness of realized volatilities and simultaneously provide a tractable framework for modeling the small-time at-the-money volatility skew. However, this flexibility comes at the cost of dealing with more sophisticated, path-dependent dynamics, which, in the case of singular kernels, even fall outside the classical setting of semimartingales.

A natural consequence of this path-dependence is the breakdown of the Markov property. Intuitively, this can be traced back to the Volterra structure itself, where convolutions with the Volterra kernel encode a non-trivial dependence on the past trajectory. This phenomenon is not merely of theoretical interest; it also plays a central role in, e.g., numerical schemes for simulating SVEs, where Euler-type discretization schemes (see~\cite{alfonsi2022}), or splitting methods (see~\cite{alfonsi2023}) depend on the whole trajectory of the process. Other schemes that incorporate the memory and provide Markovian approximations of SVEs have been studied in \cite{multifactorapproxroughvol} and \cite{MR4521278}. As for the scarce literature on \emph{non}-Markovianity of stochastic processes, let us mention the recent work~\cite{ChZh25}, where the failure of the Markov property is studied for a process involving standard Brownian motion and its concave majorant. Despite the widespread belief that stochastic Volterra processes are not Markov and the formal recognition of this fact in the literature, e.g.\ at many places of \cite{rvol}, a rigorous proof has so far remained absent. 

As an illustration, consider the rough Cox-Ingersoll-Ross process, the cornerstone of affine Volterra processes in rough volatility modeling. It describes the stochastic instantaneous variance in terms of the unique nonnegative continuous weak solution, see \cite[Theorem~6.1]{AbiJaLaPu19}, of the stochastic Volterra equation
\begin{equation}\label{eq:roughHeston}
    X_t = x_0 + \int_0^t \frac{(t-s)^{H - 1/2}}{\Gamma(H + 1/2)}\kappa( \theta - X_s)\, \mathrm{d}s 
    + \sigma \int_0^t \frac{(t-s)^{H - 1/2}}{\Gamma(H + 1/2)} \sqrt{X_s}\, \mathrm{d}B_s,\quad t\in\R_+,
\end{equation}
where $x_0 \geq 0$ denotes the initial instantaneous variance, $\kappa \geq 0$ the mean-reversion speed, $\theta \geq 0$ the long-term mean of the process, $\sigma > 0$ the vol-of-vol, $H \in (0,1/2)$ the \textit{Hurst parameter}, and $B$ a standard Brownian motion, see \cite{roughhestonhedging} and \cite{roughhestcharfct}. Here, it is natural to expect that the failure of the Markov property stems from the path-dependence introduced via the convolution with the fractional Riemann-Liouville kernel $t^{H-1/2}/\Gamma(H + 1/2)$, $t\in\R_+^*$. Generalised Langevin processes provide another interesting class of Volterra processes that appear in applications in mathematical physics (see \cite{MR3704863}).

In this paper, we study a general class of stochastic Volterra equations (SVEs) on a closed state-space $D \subseteq \R^d$ of the form
\begin{equation}\label{eq:generalSVIE}
  X_t = g(t;x) + \int_{0}^{t}K_b(t-s) \hspace{0.03cm}b(X_{s})\, \mathrm{d}s + \int_0^t K_{\sigma}(t-s)\hspace{0.03cm}\sigma(X_s)\, \mathrm{d}B_s, \quad t\in\R_+,
\end{equation} 
where $b: D \longrightarrow \R^d$ and $\sigma: D \longrightarrow \R^{d\times m}$ correspond to the drift and diffusion coefficient, $B$ is an $m$-dimensional standard Brownian motion, $K_b \in L_{\mathrm{loc}}^1(\R_+; \R^{d \times d})$, $K_{\sigma} \in L_{\mathrm{loc}}^2(\R_+; \R^{d \times d})$ denote the Volterra kernels, and $g(\cdot\hspace{0.05cm};x): \R_+ \longrightarrow \R^d$ is the initial curve of the dynamics with $g(0;x)=x\in D$. Clearly, without any further assumption, the existence and uniqueness of solutions are not guaranteed, but have been thoroughly studied in e.g.\ \cite{weak_solution,10.1214/24-EJP1234,AbiJaLaPu19,CF24,proemel_scheffels_weak}. For continuous coefficients $b$, $\sigma$ satisfying linear growth conditions, \cite[Theorem 3.4]{AbiJaLaPu19} provides weak existence for~\eqref{eq:generalSVIE} when $g(\cdot\hspace{0.05cm};x)\equiv x$, $D=\R^d$ and $K_b = K_{\sigma}$ admit a resolvent of the first kind and satisfy mild power bounds in $L^2_{\mathrm{loc}}$. This covers, e.g., completely monotone kernels. When $g(\cdot\hspace{0.05cm};x)\in C(\R_+; \R^{d})$ is sufficiently regular, weak existence to \eqref{eq:generalSVIE} can also be established via \cite[Theorem 1.2 and Theorem 6.1]{weak_solution}, \cite[Theorem 2.7]{CF24} and \cite[Theorem~3.3]{proemel_scheffels_weak} going beyond the completely monotone case. Finally, weak existence for the Jacobi Volterra process on $D=[\alpha_1,\alpha_2]$ with $\alpha_1\le \alpha_2$ can be established via \cite[Corollary~2.8]{10.1214/24-EJP1234}. 

Suppose that for each $x \in D$, \eqref{eq:generalSVIE} admits a continuous weak solution. Let $(\P_x)_{x \in D}$ be the corresponding family of their laws on the canonical path-space $C(\R_+; \R^d)$. Following standard definitions from the literature, see e.g.\ \cite[Chapter III]{revuzyor} and the notion of a Markov family of measures in \cite[Section 2.5]{karatzasshreve}, we say that \eqref{eq:generalSVIE} has the time-homogeneous Markov property, if for each Borel set $A \subseteq D$ and $t \geq 0$, the function $D \ni x \longmapsto \P_x[X_t \in A]$ is measurable, and for all $0 \leq t < T$ and $x \in D$ it holds that
\begin{equation}\label{eq: Markov timehomogeneous}
    \P_{x}[ X_T \in A \, | \, \mathcal{F}_t] = \P_{X_t}[ X_{T-t} \in A], \qquad \P_{x}\text{-a.s.},
\end{equation} 
where $(\mathcal{F}_t)_{t \in \R_+}$ denotes the natural filtration generated by the coordinate process $X$. By approximation arguments, \eqref{eq: Markov timehomogeneous} clearly extends to $\E_x[f(X_T) \, | \, \mathcal{F}_t] = \E_{X_t}[ f(X_{T-t})]$ whenever $f$ is measurable with $\mathbb{E}_x\big[|f(X_r)|\big]<\infty$ for all $r\in\R_+$ and $x\in D$. Here, $\E_x$ denotes the expectation with respect to $\P_x$ on the canonical path-space. Note that $p_t(x,A):=\P_x[X_t \in A]$, $t \in \R_+$, defines stochastic transition kernels on $D\times \mathcal{B}(D)$ that satisfy by \eqref{eq: Markov timehomogeneous} the Chapman-Kolmogorov consistency condition $p_{u}(x, \cdot) = \int_D p_{u-t}(y, \cdot)\, p_{t}(x,\,\mathrm{d}y)$ for all $x\in D$ and $0\le t<u$. Thus, under assumption \eqref{eq: Markov timehomogeneous}, for each $x_0\in D$, the solution to \eqref{eq:generalSVIE} with path measure $\P_{x_0}$ is a $D$-valued Markov process according to \cite[Definition III.1.3]{revuzyor} with initial law $\delta_{x_0}$ and transition kernels $(p_t)_{t\in\R_+^*}$. 

In this work, we examine the failure of \eqref{eq: Markov timehomogeneous} for SVEs using two complementary approaches. Firstly, let us observe that the time-homogeneous Markov property \eqref{eq: Markov timehomogeneous} necessarily implies that the first moment $m_1(t;x) = \E_x[X_t]$ satisfies
\begin{align}\label{eq: 8}
    m_1(t+s;x) = \mathbb{E}_x\big[m_1(s;X_t)\big], \qquad s,t\in\R_+,\  x\in D,
\end{align}
which follows from the semigroup property of the Markov semigroup $P_t\varphi(x):=\E_x[\varphi(X_t)]$. Hence, to explicitly compute the first moment of the process, in Section 2 we suppose that the drift is affine linear, i.e. $b(x) = b + \beta x$ with $b \in \R^d$ and $\beta \in \R^{d\times d}$, and that the initial curve satisfies $g(t; x) = g_0(t) + g_1(t)x$ where $g_0(0) = 0$ and $g_1(0) = \mathrm{id}_{\R^d}$. Under these assumptions, we show that all solutions of \eqref{eq: 8} are characterised by a relation of the Volterra kernel $K_b$ and the corresponding functions $g_0,g_1$. As a consequence, for any given Volterra kernel $K_b$, we may always find $g_0,g_1$ such that the corresponding Volterra process satisfies condition \eqref{eq: 8}. Conversely, if $g_0, g_1$ are not of the prescribed form determined by $K_b$, then \eqref{eq: 8} does not hold, and $X$ cannot be a time-homogeneous Markov process.

Subsequently, we proceed to study the analogue for the second moment $m_2(t;x) = \mathbb{E}_x[ X_t \otimes X_t]\in \R^{d\times d}$, i.e.\ we characterise all solutions of
\[
    m_2(t+s;x) = \mathbb{E}_x\big[ m_2(s;X_t) \big], \qquad s,t \in \R_+, \ x \in D.
\]
For this purpose, we further assume that \eqref{eq:generalSVIE} is an affine Volterra process with $K:=K_b=K_{\sigma}$, which allows us to compute $m_2$ semi-explicitly in terms of resolvents. Combined with \eqref{eq: 8}, this additional condition turns out to be highly restrictive and it can only be satisfied if the Volterra kernel $K$ is, up to time-dependent orthogonal rotations, a matrix exponential. For the Volterra square-root process, we show that such orthogonal rotations are necessarily trivial, while for the Volterra Ornstein-Uhlenbeck process, they are fully unconstrained. The latter provides a novel class of Gaussian processes that are not Markov, but whose time-marginals satisfy the Chapman-Kolmogorov equations and hence coincide with those of some Markov process. 

In our second approach, we study the failure of the time-homogeneous Markov property for~\eqref{eq:generalSVIE} with general drifts, $K = K_b = K_{\sigma}$, and under the technical assumption $d=m=1$ for the dimension. Our method is based on a reduction of the problem to Gaussian processes via the small-time CLT derived in \cite{FGW24} and a generalization to time-dependent initial curves $g$ as given in Theorem \ref{theorem: CLT}. More specifically, we show that, if~\eqref{eq:generalSVIE} were Markov, then the Gaussian process
\begin{align}\label{eq: Y Gaussian}
Y_t = \int_0^t \overline{K}(t-s)\,\mathrm{d}B_s, \quad t\in\R_+,
\end{align}
would also be Markov, with $\overline{K}$ related to $K$ via
\[
    \lim_{n \to \infty} \left( \int_0^{1/n} K(r)^2\, \mathrm{d}r \right)^{-1} \int_0^{s/n} K\left( \frac{t-s}{n} + r\right)K(r)\, \mathrm{d}r = \int_0^s \overline{K}(t-s + r)\overline{K}(r)\, \mathrm{d}r,
\]
for all $0<s\le t$, cf.\ \cite[Cond.~(ii) in Thm.~2.3]{FGW24}. However, the Markov property for centered Gaussian processes can be characterized explicitly in terms of their autocovariance (see \cite[Theorem V.8.1]{doob1953stochastic}, \cite[Lemma~2.8]{BaGe26}).
In this work, we directly prove that \eqref{eq: Y Gaussian} fails to be Markov for the fractional kernel $\overline{K}(t) = t^{H - 1/2}$ with $H\in\R_+^*\setminus\{\frac{1}{2}\}$, in line with the framework of self-similar Gaussian processes from \cite{BaGe26}. Remark that our second method applies to $b,\sigma$ that are merely H\"older continuous without further structural restrictions.

\subsubsection*{Notation}
Throughout the paper, we use the notations $\mathbb{R}_+ := [0,\infty)$, $\mathbb{R}_+^* := (0,\infty)$, and $\mathbb{R}^* := \mathbb{R}\setminus\{0\}$. Here and below, we write
\[
    (f \ast g)(t) = \int_0^t f(t-s)g(s)\, \mathrm{d}s
\]
where $t\in\R_+$ and $f,g$ are matrix- or vector-valued functions such that $f(t-s) g(s)$ is well-defined, and the integral exists.

\subsubsection*{Structure of the work}

In Section \ref{subsection:newmomentformula}, we present our computational approach for the class of SVEs with affine linear drift and initial curve. In Section \ref{subsection:RLnonMarkov}, we provide a direct proof that \eqref{eq: Y Gaussian} is not a Markov process for the fractional Riemann-Liouville kernel. Finally, our last section is dedicated to the alternative approach via small-time CLTs that covers a large class of SVEs \eqref{eq:generalSVIE} with general Hölder continuous coefficients, and under a
mild additional assumption on the small-time behavior of the kernel.

\section{Moment formula approach}\label{subsection:newmomentformula}

\subsection{Characterisation of the Markov property for the first moment}

In this section, we study the failure of the time-homogeneous Markov property for the particular case of~\eqref{eq:generalSVIE} where the drift is an affine function of the process, i.e. weak solutions of the $D$-valued SVE
\begin{align}\label{eq:generalaffineSVIE}
     X_t &= g(t; x)+\int_{0}^{t}K_b(t-s) \hspace{0.03cm}(b+\beta X_{s})\, \mathrm{d}s + \int_0^t K_{\sigma}(t-s)\hspace{0.03cm}\sigma(X_s)\, \mathrm{d}B_s,\quad t \in \R_{+},
\end{align} 
where $g(\cdot; x):\R_+ \longrightarrow \R^d$, $b \in \R^d$, $\beta \in \R^{d\times d}$, $\sigma: \R^d \longrightarrow \R^{d \times m}$ denotes the diffusion coefficient, $K_b: \R_+ \longrightarrow \R^{d \times d}$, and $K_{\sigma}: \R_+ \longrightarrow \R^{d \times d}$ are measurable Volterra kernels, and $B$ is an $m$-dimensional standard Brownian motion. The choice of affine drifts allows us to explicitly compute the first moment of the process in terms of resolvents.  

For the Markov property, the value $X_0 = g(0;x)=x$ plays a central role. To identify this value and perform explicit computations, we shall assume that the function $g(t) = g(t;x)$ is an affine function of~$x\in D$, i.e.
\begin{align}\label{eq: g affine}
    g(t;x) = g_0(t) + g_1(t)x,
\end{align}
where $g_0: \R_+ \longrightarrow \R^d$ and $g_1: \R_+ \longrightarrow \R^{d \times d}$. Here and below, we shall assume that the following conditions are satisfied:
\begin{assumption}\label{assumption first moment MP}
    The state-space $D \subseteq \R^d$ has a non-empty interior. The diffusion coefficient $\sigma: \R^d \longrightarrow \R^{d \times m}$ is measurable with linear growth. The Volterra kernels satisfy $K_b \in L_{\mathrm{loc}}^1(\R_+; \R^{d \times d})$, and $K_{\sigma} \in L_{\mathrm{loc}}^2(\R_+; \R^{d \times d})$. Moreover, $g_0: \R_+ \longrightarrow \R^d$ and $g_1: \R_+ \longrightarrow \R^{d \times d}$ are continuous on $\R_+$ and satisfy $g_0(0) = 0$ and $g_1(0) = \mathrm{id}_{\R^d}$.
\end{assumption}

Since $\sigma$ has linear growth and $g_0, g_1$ are locally bounded, one can show that any solution $X$ of~\eqref{eq:generalaffineSVIE} satisfies $\sup_{t \in [0,T]}\E_x[|X_t|^p] < \infty$ for each $p, T > 0$ and $x\in D$, see e.g.\ \cite[Theorem~1.4]{weak_solution}. In particular, by taking expectations in \eqref{eq:generalaffineSVIE}, we find that its first moment solves the linear Volterra equation
\[
    \E_x[X_T] = g_0(T) + g_1(T)x + \int_0^T K_b(T-t)\left( b+ \beta\hspace{0.02cm} \E_x[X_t]\right)\, \mathrm{d}t.
\]
The latter can be solved explicitly in terms of resolvents, see \cite[Lemma 4.2]{AbiJaLaPu19}. Let $R_b \in L_{\mathrm{loc}}^1(\R_+; \R^{d \times d})$ be the resolvent of the second kind for the kernel $-K_b\beta$, i.e.\ the unique solution of
\begin{align}\label{eq: Rb}
    R_b + (-K_b\beta) \ast R_b = R_b + R_b \ast (-K_b\beta) = -K_b\beta. 
\end{align}
By the general Volterra theory of \cite{grippenberg}, equations of the form $f = h + K_b \beta \ast f$ have the unique solution $f = h - R_b \ast h$. In particular, we find that  
\begin{align}\label{eq: first moment}
    \E_x[X_t] = \mathcal{E}_0(t) + \mathcal{E}_1(t)x, \quad t\in\R_+,
\end{align}
where $\mathcal{E}_0 \in C(\R_+; \R^d)$, $\mathcal{E}_1 \in C(\R_+; \R^{d \times d})$ are defined by
\begin{align*}
    \mathcal{E}_0(t) &= g_0(t) + \int_0^t E_b(t-s)\beta g_0(s)\, \mathrm{d}s + \int_0^t E_b(s)b\, \mathrm{d}s,
    \\ \mathcal{E}_1(t) &= g_1(t) + \int_0^t E_{b}(t-s)\beta g_1(s)\, \mathrm{d}s,
\end{align*}
with $E_b \in L_{\mathrm{loc}}^1(\R_+; \R^{d\times d})$ given by $E_b = K_b - R_b \ast K_b$, thus satisfying $R_b = -E_b\beta$. Inserting this into the definition of $E_{b}$, we see that it is the unique solution of the resolvent equation
\begin{align}\label{eq: Eb resolvent equation}
    E_b = K_b + E_b\beta \ast K_b.
\end{align}
The following is our first main result. It provides a characterisation of the \textit{Markov property for the first moment} of the process, as specified in \eqref{eq: MP first moment}.

\begin{theorem}\label{thm:affinetimehom}
    Suppose that Assumption \ref{assumption first moment MP} is satisfied, and for each $x \in D$, equation \eqref{eq:generalaffineSVIE} has a weak solution $(X, \mathcal{F}, (\mathcal{F}_t)_{t\in\R_+}, \P_x)$. Then the following are equivalent:
    \begin{enumerate}
        \item[(a)] For each $x \in D$ and all $0 \leq t < T$ one has
        \begin{align}\label{eq: MP first moment}
        \E_x[ X_T ] = \E_x[ \E_{X_t}[X_{T-t}]].
        \end{align}
        \item[(b)] There exist $A \in \R^{d \times d}$ and $B \in \R^d$ such that for $t\in\R_+$:
        \[
            \mathcal{E}_0(t) = \left(\int_0^t \mathrm{e}^{sA}\, \mathrm{d}s \right)B \quad \text{ and } \quad \mathcal{E}_1(t) = \mathrm{e}^{tA}.
        \]
        \item[(c)] There exist $A \in \R^{d \times d}$ and $B \in \R^d$ such that the functions $g_0, g_1$ are given by
        \begin{align*}
            g_0(t) &= \int_0^t \mathrm{e}^{sA}B\,\mathrm{d}s - \int_0^t K_b(t-s)\beta \left( \int_0^s \mathrm{e}^{rA}B\,\mathrm{d}r \right) \mathrm{d}s - \int_0^t K_b(s)b\,\mathrm{d}s,
            \\ g_1(t) &= \mathrm{e}^{tA} - \int_0^t K_b(t-s)\beta \mathrm{e}^{sA}\,\mathrm{d}s.
        \end{align*}
        \item[(d)] $g_0, g_1$ are absolutely continuous, and there exist $A \in \R^{d \times d}$, $B \in \R^d$ such that 
        \[
            K_b(t)\beta = g_1(t)A - g_1'(t) \quad \text{ and } \quad K_b(t)b = g_1(t)B - g_0'(t).
        \]
    \end{enumerate}
\end{theorem}
\begin{proof}
    $(a) \Longleftrightarrow (b):$ By direct computation and using \eqref{eq: first moment}, we find for all $x \in D$ and $0 \leq t < T$:
    \begin{align*}
        \E_x[ \E_{X_t}[X_{T-t}]] &= \E_x\left[ \mathcal{E}_0(T-t) + \mathcal{E}_1(T-t)X_t \right]
        \\ &= \mathcal{E}_0(T-t) + \mathcal{E}_1(T-t)\mathcal{E}_0(t) + \mathcal{E}_1(T-t)\mathcal{E}_1(t)x.
    \end{align*}
    Hence, property (a) is equivalent to 
    \begin{align}\label{eq: 3}
        \mathcal{E}_0(T) + \mathcal{E}_1(T)x = \mathcal{E}_0(T-t) + \mathcal{E}_1(T-t)\mathcal{E}_0(t) + \mathcal{E}_1(T-t)\mathcal{E}_1(t)x.
    \end{align}
    This relation holds under condition (b). For the converse implication, note that \eqref{eq: 3} is equivalent to $\left(\mathcal{E}_1(T) - \mathcal{E}_1(T-t)\mathcal{E}_1(t)\right)x = \mathcal{E}_0(T-t) + \mathcal{E}_1(T-t)\mathcal{E}_0(t) - \mathcal{E}_0(T)$. Evaluating this for $x,y \in D$, and then taking their difference yields
    \begin{align}\label{eq: 4}
        \big(\mathcal{E}_1(T) - \mathcal{E}_1(T-t)\hspace{0.02cm}\mathcal{E}_1(t)\big)\hspace{0.02cm}(x-y) = 0.
    \end{align}
    Since $D$ has non-empty interior, $D - D = \{ x-y \ : \ x,y \in D\}$ contains a neighbourhood of the origin. Hence, $\mathrm{span}(D-D) = \R^d$. In particular, \eqref{eq: 4} implies that $\mathcal{E}_1$ satisfies Cauchy's multiplicative functional equation $\mathcal{E}_1(T) = \mathcal{E}_1(T-t)\hspace{0.02cm}\mathcal{E}_1(t)$ for $0 \leq t < T$. Since $\mathcal{E}_1$ is continuous with $\mathcal{E}_1(0) = \mathrm{id}_{\R^d}$, we find a unique $A \in \R^{d\times d}$ such that $\mathcal{E}_1(t) = \mathrm{e}^{tA}$ for $t \geq 0$, see e.g.\ \cite[Theorem~2.9]{EngelNagel}. Likewise, inserting this back into \eqref{eq: 3}, we also obtain that
    \begin{align}\label{eq: E0}
        \mathcal{E}_0(T) = \mathcal{E}_0(T-t) + \mathrm{e}^{(T-t) A}\mathcal{E}_0(t), \qquad 0 \leq t < T.
    \end{align}
    Integration over $t \in [0,T]$ gives $T \mathcal{E}_0(T) = \int_0^T \mathcal{E}_0(t)\, \mathrm{d}t + \int_0^T \mathrm{e}^{(T-t)A}\mathcal{E}_0(t)\, \mathrm{d}t$. This shows that $\mathcal{E}_0$ is also continuously differentiable on $(0,\infty)$. Evaluating \eqref{eq: E0} at $T = t+s$ with $s \geq 0$ gives $\mathcal{E}_0(t+s) = \mathcal{E}_0(s) + \mathrm{e}^{sA}\mathcal{E}_0(t)$, and hence 
    \[
        \frac{\mathcal{E}_0(t+s) - \mathcal{E}_0(s)}{t} = \mathrm{e}^{sA} \frac{\mathcal{E}_0(t)}{t}.
    \]
    Taking the limit $t \searrow 0$ shows by $\mathcal{E}_0(0) = 0$ that $\mathcal{E}_0'(0)$ exists and satisfies $\mathcal{E}_0'(s) = \mathrm{e}^{sA}\mathcal{E}_0'(0)$. Therefore, integration yields $\mathcal{E}_0(t) = \left(\int_0^t \mathrm{e}^{s A}\, \mathrm{d}s \right) \mathcal{E}_0'(0)$. Setting $B = \mathcal{E}_0'(0)$ completes the proof.

    $(b) \Longleftrightarrow (c):$ Using the representations for $\mathcal{E}_0, \mathcal{E}_1$, we find that $g_0, g_1$ satisfy the linear Volterra equations
    \begin{align}\label{eq: 5}
        \begin{cases} 
        g_0(t) + \int_0^t E_{b}(t-s)\beta g_0(s)\,\mathrm{d}s = \int_0^t \left( \mathrm{e}^{sA}B - E_b(s)b\right)\, \mathrm{d}s,
        \\ 
        g_1(t) + \int_0^t E_b(t-s)\beta g_1(s)\,\mathrm{d}s = \mathrm{e}^{tA}. 
        \end{cases}
    \end{align}
    Again by the general Volterra theory of \cite{grippenberg}, equations of the form $f + (E_b \beta) \ast f = h$ have the unique solution $f = h - \widetilde{R} \ast h$, where $\widetilde{R}$ is the resolvent of the second kind uniquely determined by $\widetilde{R} + (E_b \beta) \ast \widetilde{R} = E_b \beta = \widetilde{R} + \widetilde{R} \ast (E_b\beta)$. Hence, $E_b\beta=-R_b$ combined with \eqref{eq: Rb} and the uniqueness of resolvents implies that $\widetilde{R} = K_b\beta$. Applying this to $g_1$ with $h = \mathrm{e}^{\cdot A}$ yields
    \begin{align*}
        g_1(t) &= \mathrm{e}^{tA} - \int_0^t K_b(t-s)\hspace{0.01cm}\beta \hspace{0.02cm}\mathrm{e}^{sA}\,\mathrm{d}s.
    \end{align*}
    Similarly, for $g_0$ we find
    \begin{align*}
        g_0(t) &= \int_0^t \mathrm{e}^{sA}B\,\mathrm{d}s - \int_0^t E_b(s)b\,\mathrm{d}s 
        \\ &\quad - \int_0^t K_b(t-s)\beta \left( \int_0^s \mathrm{e}^{rA}B\,\mathrm{d}r - \int_0^s E_b(r)b\,\mathrm{d}r \right) \mathrm{d}s
        \\ &= \int_0^t \mathrm{e}^{sA}B\,\mathrm{d}s - \int_0^t K_b(t-s)\beta \left( \int_0^s \mathrm{e}^{rA}B\,\mathrm{d}r \right) \mathrm{d}s - \int_0^t K_b(s)b\,\mathrm{d}s,
    \end{align*}
    where the second identity follows from the associativity of the convolution operator, the resolvent equation \eqref{eq: Rb},
    \begin{align*}
        -E_b + (K_b \beta) \ast E_b &= - K_b + R_b \ast K_b + (K_b \beta) \ast K_b - (K_b \beta) \ast R_b \ast K_b
        \\ &= -K_b + (K_b \beta) \ast K_b + \left( R_b + (-K_b\beta) \ast R_b\right)\ast K_b
        = -K_b,
    \end{align*}
    and $\mathrm{id}_{\R^d}\ast ((K_b \beta) \ast E_b)=(K_b \beta)\ast ( \mathrm{id}_{\R^d}\ast E_b)$. This proves the desired equivalence.
    
    $(c) \Longleftrightarrow (d):$ The explicit formulas given for $g_0, g_1$ imply that these functions are absolutely continuous. Differentiating $g_1$ gives
    \begin{align*}
        g_1'(t) &= \mathrm{e}^{tA}A - K_b(t)\beta - \int_0^t K_b(s)\beta \mathrm{e}^{(t-s)A} A\, \mathrm{d}s
        \\ &= \left( \mathrm{e}^{tA} - \int_0^t K_b(s)\beta \mathrm{e}^{(t-s)A}\, \mathrm{d}s\right)A - K_b(t) \beta 
        = g_1(t)A - K_b(t) \beta,
    \end{align*}
    and hence $K_b(t)\beta = g_1(t)A - g_1'(t)$. Likewise, differentiating $g_0$ gives
    \begin{align*}
        g_0'(t) &= \mathrm{e}^{tA}B - \int_0^t K_b(s)\hspace{0.01cm}\beta \hspace{0.02cm}\mathrm{e}^{(t-s)A}B\, \mathrm{d}s - K_b(t)b
        = g_1(t)B - K_b(t)b.
    \end{align*}
    This proves $(c) \implies (d)$. For the converse implication, it suffices to solve the differential equation for $g_1$ via the variation of constants formula and to subsequently integrate the differential equation for~$g_0$.
\end{proof}

The additional degrees of freedom $A \in \R^{d \times d}$ and $B \in \R^d$ appearing in (b) parameterise all solutions of \eqref{eq: MP first moment} in terms of the first moment \eqref{eq: first moment}. Likewise, condition (c) parameterises $g_0,g_1$ in terms of $K_b$ and $A,B$, while (d) allows us to recover $K_b$ from $g_1$ when $\beta$ is invertible. Finally, under assumption~(d), noting that $g_1$ is continuous with $g_1(0) = \mathrm{id}_{\R^d}$, and that the space of invertible matrices is open, we find $t_0 > 0$ small enough such that $g_1(t)$ is invertible for $t \in [0,t_0)$. Hence, solving the equations from~(d) with respect to $A,B$ gives $A = g_1(t)^{-1}\big( K_b(t)\beta + g_1'(t)\big)$ and $B = g_1(t)^{-1}\big( K_b(t)b + g_0'(t)\big)$.

\begin{example}[exponential-type kernel]
     Suppose that $K_b(t) = \sum_{j=1}^N c_j e^{-\lambda_j t}\mathrm{id}_{\R^d}$ is scalar-valued with $c_j \neq 0$ and $\lambda_j \in \R$ pairwise distinct. Then for any choice of $A \in \R^{d \times d}$ and $B \in \R^d$ the Markov property of the first moment \eqref{eq: MP first moment} is satisfied for
        \begin{align*}
            g_1(t) &= \mathrm{e}^{tA} - \beta \sum_{j=1}^N c_j \hspace{0.02cm}\mathrm{e}^{-\lambda_j t} \int_0^t \mathrm{e}^{r (A + \lambda_j)}\, \mathrm{d}r,
            \\ g_0'(t) &= \mathrm{e}^{tA}B - \sum_{j=1}^N c_j \hspace{0.02cm}\mathrm{e}^{-\lambda_j t}\left( \beta \int_0^t \mathrm{e}^{r(A + \lambda_j)}\, \mathrm{d}r B + b \right).
        \end{align*}
\end{example}

\begin{example}[fractional Riemann-Liouville kernel]
    Suppose that $K_b$ is scalar-valued and given by $K_b(t) = \frac{t^{\alpha-1}}{\Gamma(\alpha)}\hspace{0.02cm}\mathrm{id}_{\R^d}$ with $\alpha \in (0,1)$. Then for any choice of $A \in \R^{d \times d}$ and $B \in \R^d$, the Markov property of the first moment \eqref{eq: MP first moment} is satisfied for
    \begin{align*}
        g_1(t) &= \mathrm{e}^{tA} - \frac{1}{\Gamma(\alpha)} \int_0^t s^{\alpha-1} \beta \mathrm{e}^{(t-s)A} \,\mathrm{d}s, \\
        g_0(t) &= \left( \int_0^t g_1(s) \,\mathrm{d}s \right) B - \frac{t^\alpha}{\Gamma(\alpha+1)} b.
    \end{align*}
\end{example}

Both examples show that, for a given $K_b$, we may find the corresponding $g_0, g_1$ such that the Markov property of the first moment \eqref{eq: MP first moment} holds. Conversely, if $g_0, g_1$ are not of the form given above, then \eqref{eq: MP first moment} fails, and the process cannot possess the time-homogeneous Markov property. Below we complement such examples by showing that condition~(d) forces a particular form for the Volterra kernel~$K_b$, but it does not uniquely determine it, i.e.\ for fixed $A, B$, there exist in general several admissible choices for $g_0, g_1$, and vice versa.

\begin{example}
    Suppose that $g_0 \equiv 0$, $\beta$ is invertible and $b\neq 0$. Then according to property (d), \eqref{eq: MP first moment} is equivalent to the existence of $A \in \R^{d \times d}$ and $B \in \R^d$ such that $K_b(t)\beta = g_1(t)A - g_1'(t)$ and $K_b(t)b = g_1(t)B$ hold. Hence, it follows 
    \begin{align}\label{eq: 6}
        K_b(t) = (g_1(t)A - g_1'(t))\beta^{-1},
    \end{align}
    which determines $K_b$ in terms of $g_1$. Inserting this into the second equation in (d) gives $\big( g_1(t)A - g_1'(t)\big)\beta^{-1}b = g_1(t)B$. Rearranging the terms yields
    \[
        g_1'(t) \left[ \beta^{-1}b \right] = g_1(t) \left[ A\beta^{-1}b - B \right].
    \]
    This provides a structural restriction on the parameters $A,B$ and the function $g_1$ such that \eqref{eq: MP first moment} is satisfied. This relation can be viewed from two perspectives:
    \begin{enumerate}
        \item[(i)] Let us choose $A = (1-\lambda)\mathrm{id}_{\R^d}$ with $\lambda \in \R$ and $B = \beta^{-1}b$, which corresponds by condition (b) to the first moment
        \[
            \E_x[X_T] = \frac{\mathrm{e}^{(1-\lambda)T} - 1}{1-\lambda}\beta^{-1}b + \mathrm{e}^{(1-\lambda)T}x,
        \]
        when $\lambda\neq 1$. Then $g_1'(t)[\beta^{-1}b] = -\lambda g_1(t)[\beta^{-1}b]$, which implies 
        $g_1(t)[\beta^{-1}b] = \mathrm{e}^{-\lambda t}\beta^{-1}b$. Consequently, \eqref{eq: 6} gives $K_b(t)b = \mathrm{e}^{-\lambda t}\beta^{-1}b$. However, when $d > 1$, we still have the freedom to choose the action of $g_1$ orthogonal to the vector $\beta^{-1}b$. Hence, $K_b$ is not uniquely determined by~\eqref{eq: 6}.
        
        \item[(ii)] Suppose that $g_1(t) = h_1(t)\mathrm{id}_{\R^d}$ with a scalar-valued function $h_1$ with $h_1(0) = 1$. Then we obtain
        \[
            h_1'(t) \beta^{-1} b = h_1(t)\left( A \beta^{-1}b - B\right)
        \]
        and hence there must exist $\lambda \in \R$ such that $-\lambda \beta^{-1} b = A \beta^{-1}b - B$. If $\lambda = 0$, then $B = A \beta^{-1}b$ and $h_1'(t) = 0$, which gives $h_1\equiv 1$. Inserting this into \eqref{eq: 6} yields $K_b(t) = A\beta^{-1}$. If $\lambda \neq 0$, then $h_1'(t)[\beta^{-1} b] = -\lambda h_1(t) [\beta^{-1} b]$, which, by $\beta^{-1}b\neq 0$, implies $h_1(t) = \mathrm{e}^{-\lambda t}$. Hence, $K_b(t) = (A+\lambda \mathrm{id}_{\R^d})\beta^{-1}\mathrm{e}^{-\lambda t}$. In both cases, the matrix $A$ is, in general, not uniquely determined, which provides additional degrees of freedom for $K_b$ to satisfy \eqref{eq: MP first moment}.
    \end{enumerate}
\end{example}

Our results apply in particular to affine and polynomial Volterra processes: 
\begin{enumerate}
    \item[(i)] The Volterra Ornstein-Uhlenbeck process on $D = \R^d$ with $\sigma(x) = \sigma_0 \in \R^{d \times d}$, where existence and uniqueness of solutions is established in \cite[Section 5]{AbiJaLaPu19}.
    \item[(ii)] The Volterra square-root process on $D = \R_+^d$ with $b \in \R_+^d$, $\beta_{ij} \geq 0$ for $i \neq j$, and $\sigma(x) = \mathrm{diag}(\sigma_1 \sqrt{x_1}, \dots, \sigma_d \sqrt{x_d})$ with $\sigma_1,\dots, \sigma_d \geq 0$, see \cite[Theorem~6.1]{AbiJaLaPu19}. This clearly covers \eqref{eq:roughHeston}.
    \item[(iii)] The Jacobi Volterra process on $D = [\alpha_1, \alpha_2]$ with $\alpha_1 < \alpha_2$, and $\sigma(x) = \sqrt{(x-\alpha_1)(\alpha_2-x)}$, for which weak existence and uniqueness is established in \cite[Corollary 2.8]{10.1214/24-EJP1234}. 
\end{enumerate}

Besides these examples, Theorem~\ref{thm:affinetimehom} applies to a broad class of stochastic Volterra equations with affine drifts and a diffusion coefficient $\sigma$ that is continuous and of linear growth, provided that weak existence of solutions can be established.

\subsection{Characterisation of the Markov property for the second moment}

Theorem \ref{thm:affinetimehom} demonstrates that the Markov property for the first moment \eqref{eq: MP first moment} is, in general, not sufficient to force the Volterra kernel into an exponential form. Indeed, for a given Volterra kernel, we may always choose $g_0,g_1$ according to the formulas provided in part (c). In this section, we demonstrate that adding a similar requirement onto the second moment removes this freedom and, under minor regularity conditions, implies that the Volterra kernel is a matrix exponential. To compute the second moment explicitly, we shall further assume that $x \longmapsto \sigma \sigma^{\top}(x)$ is affine, and $K := K_b = K_{\sigma}$. This leads to affine Volterra processes on the state space $D$ defined by
\begin{align}\label{eq: AVP}
    X_{t} &= g_0(t) + g_1(t)x + \int_{0}^{t}K(t-s)\left(b + \beta X_{s}\right)\, \mathrm{d}s + \int_0^t K(t-s)\, \mathrm{d}M_s^X, \quad t\in\R_+, 
\end{align}
where $x \in D$, $K \in L_{\mathrm{loc}}^2(\R_+; \R^{d \times d})$, and $M^X$ is a continuous martingale with quadratic variation
\[
    \langle M^X \rangle_t = \int_0^t \left(\alpha_0 + \sum_{i = 1}^d X_{s,i} \hspace{0.02cm}\alpha_i \right)\, \mathrm{d}s
\]
with symmetric and positive semi-definite $\alpha_0, \alpha_1, \dots, \alpha_d \in \R^{d \times d}$. Remark that \eqref{eq: AVP} is a continuous affine Volterra process in the sense of \cite{AbiJaLaPu19} with the minor modification that the constant initial condition is replaced by $g(t;x)=g_0(t) + g_1(t)x$. It contains the Volterra square-root process, the Volterra Heston model, and the Volterra Ornstein-Uhlenbeck process as particular cases.

Since $g_0,g_1$ are continuous, it follows that $\sup_{t \in [0,T]}\E_x[|X_t|^p] < \infty$ for $p, T > 0$, see \cite{weak_solution}. Its first moment is given by \eqref{eq: first moment}. To compute its second moment, let us introduce the resolvent kernel $E = K - R \ast K \in L_{\mathrm{loc}}^2(\R_+; \R^{d \times d})$, which satisfies \eqref{eq: Eb resolvent equation}, and $R$ is defined as in \eqref{eq: Rb} for $K_b=K=K_{\sigma}$. Then $\E_x[X_t] = \mathcal{E}_0(t) + \mathcal{E}_1(t)x$ is given by \eqref{eq: first moment}, and $X$ satisfies by the Volterra variation of constants formula (see \cite[Lemma 2.5]{AbiJaLaPu19}): 
\[
    X_t = \E_x[X_t] + \int_0^t E(t-s)\, \mathrm{d}M_s^X.
\]
Let us write $v \otimes v = v v^{\top}$ with components $(v \otimes v)_{ij} = v_i v_j$ for $v \in \R^d$. Remark that, if $M \in \R^{d \times d}$, then 
\[
    M (v \otimes v)M^{\top} = (Mv) \otimes (Mv).
\]
With this notation, applying the It\^{o} isometry to the stochastic integral yields for the second moment of the process 
\begin{align*}
    \E_x[ X_t\otimes X_t ] &= \E_x[X_t] \otimes \E_x[X_t] + \mathrm{var}(X_t)
    \\ &= \left( \mathcal{E}_0(t) + \mathcal{E}_1(t)x \right)^{\otimes 2} + \E_x\left[ \int_0^t E(t-s) \,\mathrm{d}\langle M^X\rangle_s\hspace{0.02cm} E(t-s)^{\top} \right]
    \\ &= \mathcal{E}_0(t) \otimes \mathcal{E}_0(t) + \mathcal{E}_0(t) \otimes (\mathcal{E}_1(t)x) + ( \mathcal{E}_1(t)x)\otimes \mathcal{E}_0(t) + \mathcal{E}_1(t) (x\otimes x) \mathcal{E}_1(t)^{\top}
    \\ &\qquad + \int_0^t E(t-s) \left( \alpha_0 + \sum_{i=1}^d \E_x[X_{s,i}]\hspace{0.02cm} \alpha_i \right)E(t-s)^{\top}\, \mathrm{d}s.
\end{align*}
Substituting the explicit form of the first moment $\E_x[X_{s,i}] = \mathcal{E}_{0,i}(s) + (\mathcal{E}_1(s)x)_i$ as given in \eqref{eq: first moment}, yields
\begin{align}\label{eq: second moment}
    \E_x[ X_t \otimes X_t ] &= \mathcal{V}_0(t) + \mathcal{V}_1(t,x) + \mathcal{E}_1(t) (x \otimes x) \mathcal{E}_1(t)^{\top},
\end{align}
where $\mathcal{V}_0$ is independent of $x$ and $\mathcal{V}_1$ is linear in $x$, given by
\begin{align*}
    \mathcal{V}_0(t) &= \mathcal{E}_0(t) \otimes \mathcal{E}_0(t) + \int_0^t E(t-s) \left( \alpha_0 + \sum_{i=1}^d \mathcal{E}_{0,i}(s)\hspace{0.02cm}\alpha_i\right) E(t-s)^{\top}\, \mathrm{d}s,
\\ \mathcal{V}_1(t,x) &= \mathcal{E}_0(t) \otimes (\mathcal{E}_1(t)x) + (\mathcal{E}_1(t)x) \otimes \mathcal{E}_0(t) + \sum_{i=1}^d \int_0^t (\mathcal{E}_1(s)x)_i E(t-s) \hspace{0.02cm}\alpha_i\hspace{0.02cm} E(t-s)^{\top}\, \mathrm{d}s.
\end{align*}

The following is our second main result. It provides a characterisation of the Markov property for the first two moments of affine Volterra processes.

\begin{theorem}
    Suppose that the state-space $D$ has non-empty interior, that $g_0: \R_+ \longrightarrow \R^d$, $g_1: \R_+ \longrightarrow \R^{d\times d}$ are continuous, satisfy $g_0(0) = 0$, and $g_1(0) = \mathrm{id}_{\R^d}$, and $K \in L_{\mathrm{loc}}^2(\R_+; \R^{d \times d})$. Assume that for each $x \in D$, equation \eqref{eq: AVP} has a weak solution $(X, \mathcal{F}, (\mathcal{F}_t)_{t\in\R_+}, \P_x)$, and this collection satisfies \eqref{eq: MP first moment}. Then the following are equivalent:
    \begin{enumerate}
        \item[(a)] For all $0 \leq t \leq T$ and all $x \in D$ it holds
        \begin{align}\label{eq: MP second moment}
            \E_x[ X_T \otimes X_T ] = \E_x\big[ \E_{X_t}[ X_{T-t} \otimes X_{T-t} ] \big].
        \end{align}
        \item[(b)] For each $i \in \{0,\dots, d\}$ the following identity holds almost everywhere with respect to $t, \tau \geq 0$: 
            \begin{align}\label{eq: kernel structural relation}
                E(\tau + t) \alpha_i E(\tau + t)^{\top} = \mathcal{E}_1(\tau)E(t) \alpha_i E(t)^{\top}\mathcal{E}_1(\tau)^{\top}.
            \end{align}
    \end{enumerate}
\end{theorem}
\begin{proof}
Let us evaluate the right-hand side of \eqref{eq: MP second moment}. An application of \eqref{eq: second moment} almost surely yields 
\[
    \E_{X_t}[ X_{T-t} \otimes X_{T-t} ] = \mathcal{V}_0(T-t) + \mathcal{V}_1(T-t, X_t) + \mathcal{E}_1(T-t) (X_t \otimes X_t) \mathcal{E}_1(T-t)^{\top}.
\]
Taking the outer expectation $\E_x[\cdot]$ and using the linearity of $\mathcal{V}_1$ in $x$, yields
\begin{align} \label{eq: MP second moment functional}
    \E_x[ \E_{X_t}[ X_{T-t} \otimes X_{T-t} ] ] &= \mathcal{V}_0(T-t) + \mathcal{V}_1\big(T-t, \E_x[X_t]\big) + \mathcal{E}_1(T-t) \E_x[X_t \otimes X_t] \mathcal{E}_1(T-t)^{\top}.
\end{align}
By substituting $\E_x[X_t] = \mathcal{E}_0(t) + \mathcal{E}_1(t)x$ and \eqref{eq: second moment} into \eqref{eq: MP second moment functional}, and exploiting the linearity of $\mathcal{V}_1(t, \cdot)$, we obtain
\begin{align*}
    \E_x\big[ \E_{X_t}[ X_{T-t} \otimes X_{T-t} ] \big] 
    &= \mathcal{V}_0(T-t) + \mathcal{V}_1\big(T-t, \mathcal{E}_0(t) + \mathcal{E}_1(t)x\big) 
    \\ &\quad + \mathcal{E}_1(T-t) \Big( \mathcal{V}_0(t) + \mathcal{V}_1(t,x) + \mathcal{E}_1(t) (x \otimes x) \mathcal{E}_1(t)^{\top} \Big) \mathcal{E}_1(T-t)^{\top}
    \\ &= \mathcal{V}_0(T-t) + \mathcal{V}_1\big(T-t, \mathcal{E}_0(t)\big) + \mathcal{V}_1\big(T-t, \mathcal{E}_1(t)x\big)
    \\ &\quad + \mathcal{E}_1(T-t) \mathcal{V}_0(t) \mathcal{E}_1(T-t)^{\top} + \mathcal{E}_1(T-t) \mathcal{V}_1(t,x) \mathcal{E}_1(T-t)^{\top}
    \\ &\quad + \mathcal{E}_1(T-t) \mathcal{E}_1(t) (x \otimes x) \mathcal{E}_1(t)^{\top} \mathcal{E}_1(T-t)^{\top}.
\end{align*}
For \eqref{eq: MP second moment} to hold, this expression must equal $\E_x[ X_T \otimes X_T ] = \mathcal{V}_0(T) + \mathcal{V}_1(T, x) + \mathcal{E}_1(T) (x \otimes x) \mathcal{E}_1(T)^{\top}$ for all $x \in D$. Since $D$ has a non-empty interior, the polynomial coefficients of $x$ must coincide. Comparing the constant, linear, and quadratic terms in $x$, we obtain the following system of relations for all $0 \leq t \leq T$:

\textit{(i) The quadratic terms} yield the relation
\begin{align*}
    \mathcal{E}_1(T) (x \otimes x) \mathcal{E}_1(T)^{\top} = \mathcal{E}_1(T-t) \mathcal{E}_1(t) (x \otimes x) \mathcal{E}_1(t)^{\top} \mathcal{E}_1(T-t)^{\top},
\end{align*}
which is already satisfied by assumption \eqref{eq: MP first moment} and the characterisation of Theorem \ref{thm:affinetimehom}.
    
\textit{(ii) The linear terms} in $x$ yield
    \begin{align}\label{eq: MP linear}
        \mathcal{V}_1(T, x) = \mathcal{V}_1\big(T-t, \mathcal{E}_1(t)x\big) + \mathcal{E}_1(T-t) \mathcal{V}_1(t,x) \mathcal{E}_1(T-t)^{\top}.
    \end{align}
    To rewrite this relation, let us first note that the left-hand side equals 
    \begin{align*}
        \mathcal{V}_1(T, x) = \mathcal{E}_0(T) \otimes (\mathcal{E}_1(T)x) + (\mathcal{E}_1(T)x) \otimes \mathcal{E}_0(T) 
     + \sum_{i=1}^d \int_0^T (\mathcal{E}_1(s)x)_i E(T-s) \alpha_i E(T-s)^{\top}\, \mathrm{d}s.
    \end{align*}
    For the right-hand side of~\eqref{eq: MP linear}, we first expand $\mathcal{V}_1\big(T-t, \mathcal{E}_1(t)x\big)$. Using the semigroup property $\mathcal{E}_1(T-t)\mathcal{E}_1(t) = \mathcal{E}_1(T)$ from Theorem \ref{thm:affinetimehom}.(b), and applying the change of variables $u = s+t$ to the integral, we get
    \begin{align*}
    \mathcal{V}_1\big(T-t, \mathcal{E}_1(t)x\big) &= \mathcal{E}_0(T-t) \otimes (\mathcal{E}_1(T)x) + (\mathcal{E}_1(T)x) \otimes \mathcal{E}_0(T-t) 
    \\ &\quad + \sum_{i=1}^d \int_t^T (\mathcal{E}_1(u)x)_i E(T-u) \alpha_i E(T-u)^{\top}\, \mathrm{d}u.
    \end{align*}
    Expanding the second term on the right-hand side of~\eqref{eq: MP linear} yields
    \begin{align*}
        \mathcal{E}_1(T-t) \mathcal{V}_1(t, x) \mathcal{E}_1(T-t)^{\top} &= (\mathcal{E}_1(T-t)\mathcal{E}_0(t)) \otimes (\mathcal{E}_1(T)x) + (\mathcal{E}_1(T)x) \otimes \big(\mathcal{E}_1(T-t)\mathcal{E}_0(t)\big) 
        \\ &\quad + \sum_{i=1}^d \int_0^t (\mathcal{E}_1(s)x)_i \mathcal{E}_1(T-t) E(t-s) \alpha_i E(t-s)^{\top} \mathcal{E}_1(T-t)^{\top}\, \mathrm{d}s.
    \end{align*}
    Inserting both expressions into \eqref{eq: MP linear}, using the relation $\mathcal{E}_0(T) = \mathcal{E}_0(T-t) + \mathcal{E}_1(T-t)\mathcal{E}_0(t)$ provided by Theorem \ref{thm:affinetimehom} (see \eqref{eq: E0}), and finally cancelling the integral $\int_t^T (\mathcal{E}_1(u)x)_i E(T-u) \alpha_i E(T-u)^{\top}\, \mathrm{d}u$ which appears on both sides, we arrive at 
    \begin{align*}
        &\ \sum_{i=1}^d \int_0^t (\mathcal{E}_1(s)x)_i E(T-s) \alpha_i E(T-s)^{\top}\, \mathrm{d}s 
        \\ &\qquad = \sum_{i=1}^d \int_0^t (\mathcal{E}_1(s)x)_i \mathcal{E}_1(T-t) E(t-s) \alpha_i E(t-s)^{\top} \mathcal{E}_1(T-t)^{\top}\, \mathrm{d}s.
    \end{align*}
    Since this must hold for all $x \in D$ and the interior of $D$ is non-empty, we find after the substitution $T-t = \tau \geq 0$ for all $j \in \{1,\dots, d\}$ the relation $\sum_{i = 1}^d \int_0^t \mathcal{E}_{1, ij}(s) E(\tau + t - s)\alpha_i E(\tau + t - s)^{\top}\, \mathrm{d}s
        = \sum_{i=1}^d \int_0^t \mathcal{E}_{1,ij}(s) \mathcal{E}_1(\tau)E(t-s)\alpha_i E(t-s)^{\top}\mathcal{E}_1(\tau)^{\top}\, \mathrm{d}s$. Now define $F_i(u) = E(\tau + u)\alpha_i E(\tau + u)^{\top} - \mathcal{E}_1(\tau)E(u)\alpha_i E(u)^{\top}\mathcal{E}_1(\tau)^{\top}$. Then using $t-s = u$, we arrive at 
    \begin{align*}
        \sum_{i=1}^d \int_0^t \mathcal{E}_{1,ij}(t-u)F_i(u)\, \mathrm{d}u = 0, \qquad \forall j \in \{1,\dots, d\}.
    \end{align*}
    Since $\mathcal{E}_1(t) = \mathrm{e}^{tA}$ for some $A \in \R^{d \times d}$ by Theorem \ref{thm:affinetimehom}, we may differentiate this identity in $t$. Since $\mathcal{E}_{1,ij}(0) = \delta_{ij}$, this yields the linear Volterra equation $F(t) + \int_0^t \mathcal{E}'_{1}(t-u)^{\top}F(u)\, \mathrm{d}u = 0$. By uniqueness, we find $F \equiv 0$, which proves \eqref{eq: kernel structural relation} for $i \in \{1,\dots, d\}$.     
    
\textit{(iii) The constant terms} yield the relation
    \begin{align}\label{eq: MP constant}
        \mathcal{V}_0(T) = \mathcal{V}_0(T-t) + \mathcal{V}_1\big(T-t, \mathcal{E}_0(t)\big) + \mathcal{E}_1(T-t) \mathcal{V}_0(t) \mathcal{E}_1(T-t)^{\top}.
    \end{align}
    Using again the explicit form of $\mathcal{V}_0, \mathcal{V}_1$ yields 
    \begin{align*}
        &\ \mathcal{V}_0(T-t) + \mathcal{V}_1\big(T-t, \mathcal{E}_0(t)\big) + \mathcal{E}_1(T-t) \mathcal{V}_0(t) \mathcal{E}_1(T-t)^{\top}
        \\ &= \mathcal{E}_0(T-t) \otimes \mathcal{E}_0(T-t) + \mathcal{E}_0(T-t)\otimes \big(\mathcal{E}_1(T-t)\mathcal{E}_0(t)\big) + \big(\mathcal{E}_1(T-t)\mathcal{E}_0(t)\big) \otimes \mathcal{E}_0(T-t) 
        \\ &\qquad + \mathcal{E}_1(T-t) ( \mathcal{E}_0(t)\otimes \mathcal{E}_0(t))\mathcal{E}_1(T-t)^{\top}
        + \int_0^{T-t}E(T-t-s)\left( \alpha_0 + \sum_{i=1}^d \mathcal{E}_{0,i}(s)\alpha_i\right) E(T-t-s)^{\top}\, \mathrm{d}s 
        \\ &\qquad + \sum_{i=1}^d \int_{0}^{T-t}(\mathcal{E}_1(s)\mathcal{E}_0(t))_i E(T-t-s)\alpha_i E(T-t-s)^{\top}\, \mathrm{d}s
        \\ &\qquad + \int_0^t \mathcal{E}_1(T-t)E(t-s)\left( \alpha_0 + \sum_{i=1}^d \mathcal{E}_{0,i}(s)\alpha_i \right) E(t-s)^{\top}\mathcal{E}_1(T-t)^{\top}\, \mathrm{d}s,
    \end{align*}
    and similarly
    \[
        \mathcal{V}_0(T) = \mathcal{E}_0(T) \otimes \mathcal{E}_0(T) + \int_0^T E(T-s)\left( \alpha_0 + \sum_{i=1}^d \mathcal{E}_{0,i}(s)\alpha_i \right) E(T-s)^{\top}\, \mathrm{d}s.
    \]
    Noting that $\mathcal{E}_0(T) = \mathcal{E}_0(T-t) + \mathcal{E}_1(T-t)\mathcal{E}_0(t)$ by Theorem \ref{thm:affinetimehom}, see \eqref{eq: E0}, we find
    \begin{align*}
        \mathcal{E}_0(T) \otimes \mathcal{E}_0(T) &= \mathcal{E}_0(T-t) \otimes \mathcal{E}_0(T-t) + \mathcal{E}_0(T-t)\otimes \big(\mathcal{E}_1(T-t)\mathcal{E}_0(t)\big) + \big(\mathcal{E}_1(T-t)\mathcal{E}_0(t)\big) \otimes \mathcal{E}_0(T-t) 
        \\ &\qquad + \mathcal{E}_1(T-t) ( \mathcal{E}_0(t)\otimes \mathcal{E}_0(t))\mathcal{E}_1(T-t)^{\top}.
    \end{align*}
    Similarly, for the integrals we obtain
    \begin{align*}
       &\ \int_0^{T-t}E(T-t-s)\left( \alpha_0 + \sum_{i=1}^d \mathcal{E}_{0,i}(s)\alpha_i\right) E(T-t-s)^{\top}\, \mathrm{d}s 
        \\ &\qquad + \sum_{i=1}^d \int_{0}^{T-t}(\mathcal{E}_1(s)\mathcal{E}_0(t))_i E(T-t-s)\alpha_i E(T-t-s)^{\top}\, \mathrm{d}s
        \\ &= \int_0^{T-t}E(T-t-s)\alpha_0 E(T-t-s)^{\top}\, \mathrm{d}s + \sum_{i=1}^d \int_{0}^{T-t}(\mathcal{E}_{0}(s) + \mathcal{E}_1(s)\mathcal{E}_0(t))_i E(T-t-s)\alpha_i E(T-t-s)^{\top}\, \mathrm{d}s
        \\ &= \int_0^{T-t}E(T-t-s)\alpha_0 E(T-t-s)^{\top}\, \mathrm{d}s + \sum_{i=1}^d \int_{0}^{T-t}\mathcal{E}_{0,i}(t+s) E(T-t-s)\alpha_i E(T-t-s)^{\top}\, \mathrm{d}s
        \\ &= \int_0^{T-t}E(s)\alpha_0 E(s)^{\top}\, \mathrm{d}s + \sum_{i=1}^d \int_{t}^{T}\mathcal{E}_{0,i}(s) E(T-s)\alpha_i E(T-s)^{\top}\, \mathrm{d}s.
    \end{align*}
    Hence, relation \eqref{eq: MP constant} reduces to
    \begin{align*}
        &\ \int_{T-t}^T E(s) \alpha_0 E(s)^{\top}\, \mathrm{d}s + \sum_{i=1}^d \int_0^t \mathcal{E}_{0,i}(s) E(T-s)\alpha_i E(T-s)^{\top}\, \mathrm{d}s
        \\ &= \int_0^t \mathcal{E}_1(T-t)E(t-s) \alpha_0 E(t-s)^{\top}\mathcal{E}_1(T-t)^{\top}\, \mathrm{d}s 
        \\ &\qquad + \sum_{i=1}^d \int_0^t \mathcal{E}_{0,i}(s) \mathcal{E}_1(T-t)E(t-s) \alpha_i E(t-s)^{\top}\mathcal{E}_1(T-t)^{\top}\, \mathrm{d}s.
    \end{align*}
    Considering the above equation for $T = \tau + t$ with $\tau \geq 0$ and then substituting $t-s = u$, we arrive at the identity
    \begin{align*}
        &\int_{0}^t E(\tau + u) \left( \alpha_0 + \sum_{i=1}^d \mathcal{E}_{0,i}(t-u) \alpha_i \right) E(\tau + u)^{\top}\, \mathrm{d}u  
        \\ &= \int_0^t \mathcal{E}_1(\tau)E(u) \left( \alpha_0 + \sum_{i=1}^d \mathcal{E}_{0,i}(t-u)\alpha_i\right) E(u)^{\top}\mathcal{E}_1(\tau)^{\top}\, \mathrm{d}u. 
    \end{align*}
    Using \eqref{eq: kernel structural relation} for $i=1,\dots, d$ yields $\int_0^t E(\tau + u)\alpha_0 E(\tau + u)^{\top}\, \mathrm{d}u = \int_0^t \mathcal{E}_1(\tau)E(u)\alpha_0 E(u)^{\top}\mathcal{E}_1(\tau)^{\top}\, \mathrm{d}u$, and hence proves \eqref{eq: kernel structural relation} for $i = 0$ by differentiation. The converse implication $(b) \Longrightarrow (a)$ follows from the same identities.
\end{proof}

Let $O(d)$ be the orthogonal group of $d\times d$-matrices $O$ that satisfy $O O^{\top} = \mathrm{id}_{\R^d}$. While \eqref{eq: MP first moment} does, in general, not suffice to force the Volterra kernel into an exponential form, the addition of the second moment condition \eqref{eq: MP second moment} turns out to be sufficient under mild regularity conditions.

\begin{corollary}\label{cor: discrete centralizer}
Suppose that the state-space $D$ has non-empty interior, that $g_0: \R_+ \longrightarrow \R^d$ and $g_1: \R_+ \longrightarrow \R^{d\times d}$ are continuous, satisfy $g_0(0) = 0$, and $g_1(0) = \mathrm{id}_{\R^d}$. Assume that for each $x \in D$, equation \eqref{eq: AVP} has a weak solution $(X, \mathcal{F}, (\mathcal{F}_t)_{t\in\R_+}, \P_x)$, and this collection satisfies \eqref{eq: MP first moment} and \eqref{eq: MP second moment}, i.e.\ the Markov property for the first and second moments. Assume that $\Gamma := \sum_{i=0}^d \alpha_i$ is positive definite, that $K \in C((0,\infty); \R^{d \times d}) \cap L_{\mathrm{loc}}^2(\R_+; \R^{d \times d})$, and the resolvent kernel $E$ is invertible on $\R_+^*$. 

Then there exists a continuous family of orthogonal matrices $(O_t)_{t > 0}$ in $\R^{d \times d}$ and a symmetric positive definite matrix $F$ such that
\begin{align}\label{eq: E representation t positive}
    E(t) = \mathcal{E}_1(t) F^{1/2} O_t \Gamma^{-1/2}, \qquad t > 0.
\end{align}
Furthermore, for each $i \in \{0, \dots, d\}$, the matrix $O_t \widetilde{\alpha}_i O_t^{\top}$ is constant in $t > 0$, where $\widetilde{\alpha}_i = \Gamma^{-1/2}\alpha_i \Gamma^{-1/2}$. Moreover, if the centralizer of $\{ \widetilde{\alpha}_0, \dots, \widetilde{\alpha}_d \}$ in the orthogonal group given by
\[
    \mathcal{C} = \big\{ O \in O(d) \ : \ O\widetilde{\alpha}_i = \widetilde{\alpha}_i O \text{ for all } i\in \{0, \dots, d\} \big\}
\]
is discrete, then $E$ can be continuously extended to $t = 0$ with $E(0)$ being invertible, and the resolvent kernel reduces to the pure matrix exponential form
\[
    E(t) = \mathcal{E}_1(t)E(0), \qquad t \geq 0.
\]
Consequently, the original Volterra kernel $K$ is also a matrix exponential.
\end{corollary}
\begin{proof}
Since $\Gamma$ is symmetric and positive definite, there exists a unique symmetric positive definite square root $\Gamma^{1/2}$. Taking the sum over $i \in \{0, \dots, d\}$ in \eqref{eq: kernel structural relation} and using $T = \tau + t$ yields 
\begin{align}\label{eq: 10}
    E(T) \Gamma E(T)^{\top} = \mathcal{E}_1(T-t)E(t)\Gamma E(t)^{\top} \mathcal{E}_1(T-t)^{\top}.
\end{align}
By noting that $\mathcal{E}_1(t)^{-1} = \mathcal{E}_1(-t)$ due to its exponential form provided by Theorem \ref{thm:affinetimehom}, rearranging terms yields $\mathcal{E}_1(-T)E(T)\Gamma E(T)^{\top} \mathcal{E}_1(-T)^{\top} = \mathcal{E}_1(-t)E(t) \Gamma E(t)^{\top} \mathcal{E}_1(-t)^{\top}$. Since this holds for all $0 < t \leq T$, both sides must be equal to a constant matrix $F \in \R^{d \times d}$. Because $\Gamma$ is positive definite and $E(t)$ is invertible, $F$ is symmetric and positive definite. Thus, we find $E(t) \Gamma E(t)^{\top} = \mathcal{E}_1(t)F \mathcal{E}_1(t)^{\top}$ for $t > 0$, which can be rewritten as
\[
    (E(t)\Gamma^{1/2})(E(t)\Gamma^{1/2})^{\top} = (\mathcal{E}_1(t)F^{1/2})(\mathcal{E}_1(t)F^{1/2})^{\top}.
\]
It is a standard result in linear algebra that if $AA^{\top} = BB^{\top}$ and $B$ is invertible, then $A = BO$ for some orthogonal matrix $O$. Therefore, there exists a family of orthogonal matrices $(O_t)_{t > 0}$ such that $E(t)\Gamma^{1/2} = \mathcal{E}_1(t)F^{1/2}O_t$, which yields $E(t) = \mathcal{E}_1(t)F^{1/2}O_t \Gamma^{-1/2}$. Because $K$ is locally square integrable, so is $R$ given by \eqref{eq: Rb} and hence also $E$. Thus $R \ast K \in C(\R_+; \R^{d \times d})$, and hence $E = K - R \ast K$ is continuous on $(0,\infty)$. Likewise, since $g_0, g_1$ are continuous, also $\mathcal{E}_1$ is continuous due to the explicit form given after \eqref{eq: first moment}. Thus, the map $t \longmapsto O_t = F^{-1/2}\mathcal{E}_1(-t)E(t)\Gamma^{1/2}$ is continuous. 

Substituting this form of $E(t)$ back into \eqref{eq: kernel structural relation} and using the semigroup property $\mathcal{E}_1(\tau+t) = \mathcal{E}_1(\tau)\mathcal{E}_1(t)$, we obtain
\[
    \mathcal{E}_1(\tau+t)F^{1/2} O_{\tau+t} \widetilde{\alpha}_i O_{\tau+t}^{\top} F^{1/2}\mathcal{E}_1(\tau+t)^{\top} = \mathcal{E}_1(\tau+t)F^{1/2} O_{t} \widetilde{\alpha}_i O_{t}^{\top} F^{1/2}\mathcal{E}_1(\tau+t)^{\top},
\]
where $\widetilde{\alpha}_i = \Gamma^{-1/2}\alpha_i\Gamma^{-1/2}$. Canceling the invertible matrices on both sides gives $O_{\tau+t} \widetilde{\alpha}_i O_{\tau+t}^{\top} = O_t \widetilde{\alpha}_i O_t^{\top}$ and completes the proof of the first assertion. 

To prove the second assertion, note that for fixed $s > 0$, $(O_s^{\top}O_t)\widetilde{\alpha}_i (O_s^{\top}O_t)^{\top} = \widetilde{\alpha}_i$ holds for all $t \geq s$. Hence, the continuous family of orthogonal matrices $[s, \infty) \ni t \longmapsto O_s^{\top}O_t$ lies in the centralizer $\mathcal{C}$ of the set $\{ \widetilde{\alpha}_0, \dots, \widetilde{\alpha}_d \}$. Assuming that $\mathcal{C}$ is discrete, any continuous map taking values in $\mathcal{C}$ must be constant. Since $O_s^{\top}O_s = \mathrm{id}_{\R^d}$, we must have $O_s^{\top}O_t = \mathrm{id}_{\R^d}$ for all $t \geq s$, and hence $O_t = O_s \equiv O$ for all $0 < s \leq t$. Taking the limit $t \searrow 0$ in \eqref{eq: E representation t positive}, we find 
\[
    \lim_{t \searrow 0} E(t) = \lim_{t \searrow 0} \mathcal{E}_1(t) F^{1/2} O \Gamma^{-1/2} = F^{1/2} O \Gamma^{-1/2} =: E(0).
\]
Replacing $F^{1/2} O \Gamma^{-1/2}$ with $E(0)$ in \eqref{eq: E representation t positive}, we conclude that $E(t) = \mathcal{E}_1(t)E(0)$ for all $t \geq 0$. 

To conclude that the original Volterra kernel $K$ is also a matrix exponential, recall from Theorem \ref{thm:affinetimehom} that $\mathcal{E}_1(t) = \mathrm{e}^{tA}$ for some matrix $A \in \R^{d \times d}$. Thus, the resolvent kernel takes the form $E(t) = \mathrm{e}^{tA}E(0)$ and satisfies the differential equation $E'(t) = A E(t)$. Furthermore, by \eqref{eq: Eb resolvent equation}, the Volterra kernel $K$ is linked to $E$ via the Volterra resolvent equation $K(t) = E(t) - \int_0^t E(t-s) \beta K(s)\, \mathrm{d}s$. Since $E$ is differentiable, $K$ is also differentiable. Applying the Leibniz rule to differentiate this integral equation with respect to $t$ yields
\begin{align*}
    K'(t) &= E'(t) - E(0)\beta K(t) - \int_0^t E'(t-s)\beta K(s)\, \mathrm{d}s \\
    &= A E(t) - E(0)\beta K(t) - A \int_0^t E(t-s)\beta K(s)\, \mathrm{d}s \\
    &= A \left( E(t) - \int_0^t E(t-s)\beta K(s)\, \mathrm{d}s \right) - E(0)\beta K(t) = (A - E(0)\beta) K(t).
\end{align*}
With the initial condition $K(0) = E(0)$, this homogeneous linear ordinary differential equation admits the unique solution $K(t) = \mathrm{e}^{t(A - E(0)\beta)}E(0)$. Thus, $K$ is a pure matrix exponential, which completes the proof.
\end{proof}

We close this section with a discussion of the most important examples of affine Volterra processes~\eqref{eq: AVP}. For the general theory and additional details on their construction, we refer to~\cite{AbiJaLaPu19}.

\begin{example}[one-dimensional affine Volterra processes]
    Suppose that $d = 1$, $K \in C((0,\infty)) \cap L_{\mathrm{loc}}^2(\R_+)$, and assume that the corresponding resolvent $E$ does not vanish on $(0,\infty)$. Let $X$ be a one-dimensional affine Volterra process of the form \eqref{eq: AVP} with nontrivial diffusion (i.e. $\alpha_0 + \alpha_1 > 0$) satisfying \eqref{eq: MP first moment} and \eqref{eq: MP second moment}. Since the orthogonal group in dimension one is discrete, given by reflections $O(1) = \{-1, 1\}$, also $\mathcal{C} \subset O(1)$ is discrete. By Corollary \ref{cor: discrete centralizer}, there exist $A, B \in \R$ such that the Volterra kernel is of the form $K(t) = \mathrm{e}^{tA}B$, $t \geq 0$.
\end{example}

The assumption $E \neq 0$ on $(0,\infty)$ is satisfied if $K$ is additionally completely monotone and not identically zero. Indeed, any completely monotone function that is not identically zero is strictly positive. If $\beta \geq 0$, then $E \geq K > 0$ by complete monotonicity, while for $\beta < 0$, the resolvent $E$ itself is by the relation $R = -E\beta$ completely monotone, not identically zero, and hence strictly positive.

\begin{example}[affine Volterra processes on $D = \R_+^d$]
    Suppose that $D = \R_+^d$, and let $X$ be the Volterra square-root process given by
    \[
        X_t = g_0(t) + g_1(t)x + \int_0^t K(t-s)\hspace{0.02cm}(b + \beta X_s)\, \mathrm{d}s + \int_0^t K(t-s)\hspace{0.02cm}\sigma(X_s)\, \mathrm{d}B_s,\quad t\in\R_+,
    \]
    where $\sigma(x) = \mathrm{diag}(\sqrt{c_1 x_1}, \dots, \sqrt{c_d x_d})$ with $c_1,\dots, c_d > 0$, and $K = \mathrm{diag}(K_1,\dots, K_d)$ is such that $0 < K_1,\dots, K_d \in L_{\mathrm{loc}}^2(\R_+)$ are completely monotone, see \cite[Section~6]{AbiJaLaPu19} for the study of such processes. Then $X$ is an affine Volterra process with $\alpha_0 = 0$ and $\alpha_i = c_i e_i e_i^{\top}$ for $i \in \{1,\dots, d\}$, where $e_i$ is the $i$-th standard basis vector. In particular, $\Gamma = \sum_{i=1}^d \alpha_i = \mathrm{diag}(c_1, \dots, c_d)$ is strictly positive definite. The scaled matrices are given by $\widetilde{\alpha}_i = \Gamma^{-1/2}\alpha_i \Gamma^{-1/2} = e_i e_i^{\top}$. Since any orthogonal matrix $O$ that commutes with all $e_i e_i^{\top}$ is necessarily diagonal and hence has entries $\pm 1$, it follows that the centralizer $\mathcal{C}$ is a finite group isomorphic to $\{-1, 1\}^d$. Hence, if the resolvent $E$ is invertible on $(0,\infty)$, then Corollary \ref{cor: discrete centralizer} implies that $K(t) = \mathrm{e}^{tM}E(0)$ for some diagonal matrix $M$, as soon as the Markov property is satisfied for the first and second moments, i.e.\ \eqref{eq: MP first moment} and~\eqref{eq: MP second moment}.
\end{example}

Likewise, if $K$ is scalar-valued (i.e. $K_1 = \dots = K_d$), not identically zero, and $\beta = \beta^{\top}$ is negative definite, then $E$ is completely monotone and hence invertible for all $t > 0$. This covers the most important case of the Volterra square-root process with mean-reversion.

\begin{example}[Multivariate Volterra Ornstein-Uhlenbeck process]
    Consider a $d$-dimensional Volterra Ornstein-Uhlenbeck process on $D = \R^d$ with $d \geq 2$ given by
    \begin{align}\label{eq: VOU}
        X_t = g_0(t) + g_1(t)x + \int_0^t K(t-s)\left( b+\beta X_s\right)\, \mathrm{d}s + \int_0^t K(t-s)\Sigma^{1/2} \, \mathrm{d}B_s,
    \end{align}
    where $K \in C((0,\infty); \R^{d \times d}) \cap L_{\mathrm{loc}}^2(\R_+; \R^{d \times d})$ is such that the resolvent $E$ is invertible on $(0,\infty)$, and $\Sigma \in \R^{d \times d}$ is symmetric and positive definite. Then $X$ is an affine Volterra process with $\alpha_0 = \Sigma$ and $\alpha_1 = \dots = \alpha_d = 0$. The scaled matrices are given by $\widetilde{\alpha}_0 = \Sigma^{-1/2}\Sigma \Sigma^{-1/2} = \mathrm{id}_{\R^d}$ and $\widetilde{\alpha}_i = 0$ for $i \geq 1$. Hence, the centralizer given by $\mathcal{C} = O(d)$ is not discrete when $d \geq 2$. The first part of Corollary \ref{cor: discrete centralizer} gives the representation
    \begin{align}\label{eq: K rotational}
        E(t) = \mathcal{E}_1(t)F^{1/2}O_t \Sigma^{-1/2}, \qquad t > 0,
    \end{align}
    for some symmetric positive semi-definite matrix $F$, and a continuous family $(O_t)_{t > 0} \subset O(d)$. However, the orthogonal family $(O_t)_{t > 0}$ does not need to be given by a matrix-exponential. For instance, for any skew-symmetric matrix $Y \in \R^{d \times d}$, the choice $O_t = \mathrm{e}^{\psi(t)Y}$ with continuous $\psi: (0,\infty) \longrightarrow \R$ yields $E(t) = O_t \Sigma^{-1/2}$ with $K$ implicitly defined by $K = E - E\beta \ast K$, such that $X$ given by \eqref{eq: VOU} satisfies \eqref{eq: MP first moment} and \eqref{eq: MP second moment}. 
\end{example}

In this final example, one might hope that imposing the Markov property on higher-order moments, analogously to \eqref{eq: MP first moment} and \eqref{eq: MP second moment}, would uniquely determine $E$, and hence $K$, to be of matrix-exponential form. The following proposition shows, however, that this is not the case. In fact, \eqref{eq: K rotational} is equivalent to the Chapman–Kolmogorov equations.

\begin{proposition}
    Let $X$ be the $d$-dimensional Volterra Ornstein-Uhlenbeck process given by~\eqref{eq: VOU} with state-space $D = \R^d$, $\Sigma \in \R^{d \times d}$ symmetric and positive definite, $b \in \R^d$, and $\beta \in \R^{d \times d}$. Suppose that $g_0: \R_+ \longrightarrow \R^d$ and $g_1: \R_+ \longrightarrow \R^{d \times d}$ are continuous and satisfy $g_0(0) = 0$ and $g_1(0) = \mathrm{id}_{\R^d}$. Assume that $K \in C((0,\infty); \R^{d \times d}) \cap L_{\mathrm{loc}}^2(\R_+; \R^{d \times d})$, and that the resolvent $E$ is invertible on $(0,\infty)$. Then the following are equivalent:
    \begin{enumerate}
        \item[(a)] The one-dimensional time-marginals $p_t(x, \cdot) = \P_x[X_t \in \cdot]$ with $x \in \R^d$ and $t \geq 0$ satisfy the Chapman-Kolmogorov equations, i.e.
        \[
            p_{t+s}(x, \cdot) = \int_{\R^d} p_s(y,\cdot)\, p_t(x,\mathrm{d}y), \qquad s,t \geq 0, \ x \in \R^d.
        \]
        \item[(b)] There exist a matrix $A \in \R^{d \times d}$, a vector $B \in \R^d$, and a family of orthogonal matrices $(O_t)_{t > 0} \subset O(d)$ and a symmetric positive-definite matrix $F$ such that for all $t > 0$:
        \begin{align*}
            \mathcal{E}_1(t) = \mathrm{e}^{tA}, \quad \mathcal{E}_0(t) = \left( \int_0^t \mathrm{e}^{sA}\, \mathrm{d}s \right)B, \quad E(t) = \mathrm{e}^{tA}F^{1/2}O_t \Sigma^{-1/2}.
        \end{align*}
    \end{enumerate}   
\end{proposition}
\begin{proof}
    By the variation of constants formula for stochastic Volterra equations, equation \eqref{eq: VOU} can be rewritten to $X_t = m(t,x) + \int_0^t E(t-s)\Sigma^{1/2}\, \mathrm{d}B_s$, where $m(t,x):=\E_x[ X_t] = \mathcal{E}_0(t) + \mathcal{E}_1(t)x$ is given as in \eqref{eq: first moment}. As a Gaussian process, its distribution is uniquely determined by its mean $m(t,x)$ and its covariance function
    \begin{equation}\label{eq: VOU variance general}
        V(t) = \int_0^t E(s)\Sigma E(s)^{\top}\, \mathrm{d}s.
    \end{equation}
    By approximation, property (a) has an equivalent form expressed in terms of characteristic functions
    \begin{equation}\label{eq: VOU Chapman Kolmogorov general}
        \E_x\big[ \mathrm{e}^{\mathrm{i} u^{\top} X_{t+\tau}}\big] = \E_x\Big[ \E_{X_t}\big[ \mathrm{e}^{ \mathrm{i} u^{\top} X_{\tau}}\big] \Big], \qquad t,\tau \geq 0, \ x,u \in \R^d.
    \end{equation}
    The left-hand side of \eqref{eq: VOU Chapman Kolmogorov general} evaluates to
    \begin{equation}\label{eq: LHS char function general}
        \E_x\big[ \mathrm{e}^{\mathrm{i} u^{\top} X_{t+\tau}}\big] = \exp\left( \mathrm{i} u^{\top} m(t+\tau, x) - \frac{1}{2} u^{\top} V(t+\tau) u \right),
    \end{equation}
    while the right-hand side takes the form
    \begin{align*}
        \E_x\Big[ \E_{X_t}\big[ \mathrm{e}^{\mathrm{i} u^{\top} X_{\tau}}\big] \Big]
        &= \E_x\left[ \mathrm{e}^{\mathrm{i} u^{\top} m(\tau, X_t) - \frac{1}{2}u^{\top}V(\tau)u} \right]
        \\ &= \mathrm{e}^{\mathrm{i} u^{\top}\mathcal{E}_0(\tau) - \frac{1}{2}u^{\top}V(\tau)u}\, \E_x\left[ \mathrm{e}^{\mathrm{i}(\mathcal{E}_1(\tau)^{\top}u)^{\top}X_t} \right]
        \\ &= \exp\left( \mathrm{i} u^{\top} \big[ \mathcal{E}_0(\tau) + \mathcal{E}_1(\tau)m(t,x) \big] - \frac{1}{2} u^{\top} \big[ V(\tau) + \mathcal{E}_1(\tau)V(t)\mathcal{E}_1(\tau)^{\top} \big] u \right).
    \end{align*}
    Since \eqref{eq: VOU Chapman Kolmogorov general} is supposed to hold for all $x, u \in \R^d$, we can equate the mean vectors and covariance matrices, which yields the set of conditions
    \begin{align}
        m(t+\tau, x) &= \mathcal{E}_0(\tau) + \mathcal{E}_1(\tau)m(t,x), \label{eq: VOU mean match} \\
        V(t+\tau) &= V(\tau) + \mathcal{E}_1(\tau)V(t)\mathcal{E}_1(\tau)^{\top}. \label{eq: VOU variance match}
    \end{align}
    Substituting $m(t,x) = \mathcal{E}_0(t) + \mathcal{E}_1(t)x$ into \eqref{eq: VOU mean match} and separating the constant from the terms proportional to $x$, gives $\mathcal{E}_1(t+\tau) = \mathcal{E}_1(\tau)\mathcal{E}_1(t)$ and $\mathcal{E}_0(t+\tau) = \mathcal{E}_0(\tau) + \mathcal{E}_1(\tau)\mathcal{E}_0(t)$. Since $g_0,g_1$ are continuous, the particular form of $\mathcal{E}_0, \mathcal{E}_1$ given after \eqref{eq: first moment} shows that $\mathcal{E}_0, \mathcal{E}_1$ are continuous. Hence, $\mathcal{E}_1(t) = \mathrm{e}^{tA}$ for some $A \in \R^{d \times d}$, and arguing similarly to \eqref{eq: E0} yields $\mathcal{E}_0(t) = \left( \int_0^t \mathrm{e}^{sA}\, \mathrm{d}s\right)B$ for some $B \in \R^d$.

    Finally, substituting the matrix exponential $\mathcal{E}_1(\tau) = \mathrm{e}^{\tau A}$ into condition \eqref{eq: VOU variance match} yields
    \[
        V(t+\tau) - V(\tau) = \mathrm{e}^{\tau A}V(t)\mathrm{e}^{\tau A^{\top}}.
    \]
    Dividing both sides by $t$ and taking the limit as $t \downarrow 0$ reveals that the right-derivative of $V$ at zero, denoted $F := \lim_{t \downarrow 0} \frac{1}{t}V(t)$, exists and is symmetric and positive definite. This yields $V'(\tau) = \mathrm{e}^{\tau A} F \mathrm{e}^{\tau A^{\top}}$ for $\tau > 0$. However, in view of \eqref{eq: VOU variance general}, for almost every $t > 0$ we also find $V'(t) = E(t)\Sigma E(t)^{\top}$. Since the right-hand side is continuous in $t$, this identity extends to all $t > 0$. Thus we obtain $E(t)\Sigma E(t)^{\top} = \mathrm{e}^{t A} F \mathrm{e}^{t A^{\top}}$ for $t > 0$. Since $F$ is symmetric and positive definite, there exists a unique symmetric positive definite square root $F^{1/2}$. This yields  
    \[
        (E(t)\Sigma^{1/2})(E(t)\Sigma^{1/2})^{\top} = (\mathrm{e}^{tA}F^{1/2})(\mathrm{e}^{tA}F^{1/2})^{\top}.
    \]
    Because $\Sigma^{1/2}$ is strictly positive definite, there exists a unique family of orthogonal matrices $(O_t)_{t > 0} \subset O(d)$ such that $E(t)\Sigma^{1/2} = \mathrm{e}^{tA}F^{1/2}O_t$, which yields assertion (b).

    Conversely, if (b) holds, substituting the explicit forms of $\mathcal{E}_0(t)$, $\mathcal{E}_1(t)$, and $E(t)$ into the definitions of the mean $m(t,x)$ and variance $V(t)$ directly verifies both \eqref{eq: VOU mean match} and \eqref{eq: VOU variance match}. This establishes the equivalence of the characteristic functions in \eqref{eq: VOU Chapman Kolmogorov general}, proving that the one-dimensional time-marginals satisfy the Chapman-Kolmogorov equations.
\end{proof}

Let us stress that the Chapman-Kolmogorov equations in~(a) alone are not sufficient to assert that~$X$ is a Markov process. However, by the general theory of Markov processes, we may find another Markov process $\widetilde{X}$ that has the same one-dimensional time marginals as~$X$. Thus, $K$ derived from \eqref{eq: K rotational} inserted into \eqref{eq: VOU} provides candidates of Gaussian processes that are not Markov, but whose time-marginals satisfy the Chapman-Kolmogorov equations. The failure of the Markov property could be shown, e.g., via the characterisation in terms of their covariance function, see e.g. \cite[Theorem V.8.1]{doob1953stochastic}.

\section{Gaussian Riemann-Liouville case}\label{subsection:RLnonMarkov}

As a preparation for Section \ref{subsection: NonMarkovviaCLT}, we provide in this section a new and direct proof that the fractional Riemann-Liouville process given by
\[
    Y_t := \int_{0}^{t}(t - s)^{H-1/2}\,\mathrm{d}B_{s}, \qquad t \in\R_+,
\]
is not a Markov process for $H \in\R_+^* \setminus \{\frac{1}{2}\}$. Firstly, observe that $Y$ is a continuous centered Gaussian process, whence its law is uniquely determined by its covariance function $c(t,s) = \E[ Y_t\hspace{0.02cm} Y_s ]$ with $s,t\in\R_+$. A criterion due to \cite[Theorem V.8.1]{doob1953stochastic} states that a centered Gaussian process has the (not necessarily time-homogeneous) Markov property
\begin{align}\label{eq: Markov property}
    \P[ Y_T \in \cdot \, | \, \F_t] = \P[ Y_T \in \cdot \, | \, Y_t  ],\hspace{0.3cm}\mbox{a.s.},
\end{align}
if and only if its covariance function satisfies for all $0 \leq s \leq t \leq u$ the identity
\begin{equation}\label{eq:GaussiannonMarkovfuncEq}
    c(s, u) \hspace{0.03cm}c(t,t) = c(s,t) \hspace{0.03cm}c(t,u).
\end{equation}
This condition can be effectively used to show that, e.g., fractional Brownian motion $B^H$ with Hurst parameter $H \in (0,1)$ is a Markov process only for $H = 1/2$, see \cite[Theorem 2.3]{nourdinFBM}. Further applications of this condition in the framework of self-similar Gaussian processes are given in \cite{BaGe26}. In particular, in \cite[Example 6.3]{BaGe26} it is shown that the Riemann-Liouville process $Y$ defined above does not possess the Markov property. Below, we give a novel and more direct proof based on regular conditional distributions. 

\begin{lemma}\label{lemma:RLnonMarkov}
   For every (Hurst) parameter $H\in\R_+^*\setminus \{\frac{1}{2}\}$, there are time points $\beta_1$, $\beta_2$ and $\beta_3$ satisfying $0<\beta_1<\beta_2<\beta_3$ and a finite interval
   $I\subset \mathbb R$ such that
   \begin{equation*}
     \mathbb{P}[Y_{\beta_3}\in I\, |\, Y_{\beta_2}=0, Y_{\beta_1}=1]
     >\mathbb{P}[Y_{\beta_3}\in I\, |\,  Y_{\beta_2}=0].
   \end{equation*}
\end{lemma}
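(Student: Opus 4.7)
The plan is to reduce the claim to the failure of Doob's covariance factorisation for $Y$, and then to rule out such a factorisation via a short ODE argument exploiting the $H$-self-similarity of the Riemann--Liouville process.

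Since $Y$ is a centred continuous Gaussian process with covariance $c(s,t)=\E[Y_s Y_t]$, standard Gaussian regression gives that $Y_{\beta_3}$ conditionally on $(Y_{\beta_1},Y_{\beta_2})=(1,0)$ is normal with mean
\[
\alpha := \frac{c(\beta_1,\beta_3)\,c(\beta_2,\beta_2) - c(\beta_1,\beta_2)\,c(\beta_2,\beta_3)}{c(\beta_1,\beta_1)\,c(\beta_2,\beta_2) - c(\beta_1,\beta_2)^2}
\]
and some variance $v_1$, while $Y_{\beta_3}$ conditionally on $Y_{\beta_2}=0$ is normal with mean $0$ and some variance $v_2$. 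The law of total variance yields $v_2-v_1 = \alpha^2\,\mathrm{Var}(Y_{\beta_1}\,|\,Y_{\beta_2})\ge 0$, with equality iff $\alpha=0$. Whenever $\alpha\ne 0$, the density of $N(\alpha,v_1)$ evaluated at its mean strictly exceeds that of $N(0,v_2)$ at the same point, i.e.\ $\tfrac{1}{\sqrt{2\pi v_1}}> \tfrac{\exp(-\alpha^2/(2v_2))}{\sqrt{2\pi v_2}}$, so by continuity the probabilities of $I=(\alpha-\varepsilon,\alpha+\varepsilon)$ inherit this strict ordering for all sufficiently small $\varepsilon>0$. Thus the lemma reduces to producing $0<\beta_1<\beta_2<\beta_3$ at which Doob's identity $c(\beta_1,\beta_3)\,c(\beta_2,\beta_2) = c(\beta_1,\beta_2)\,c(\beta_2,\beta_3)$ fails.

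The substitution $\rho=s-r$ in the covariance integral yields, for $0<s<t$,
\[
c(s,t) = (t-s)^{2H}\,\Phi\!\left(\tfrac{s}{t-s}\right),\qquad \Phi(\xi) := \int_0^\xi v^{H-1/2}(1+v)^{H-1/2}\,\mathrm{d}v,
\]
which manifests the $H$-self-similarity $c(\lambda s,\lambda t)=\lambda^{2H}c(s,t)$. Arguing by contradiction, assume Doob's identity holds for every $0<s<t<u$; equivalently, $\partial_s\partial_t\log c(s,t)=0$ on $\{s<t\}$. A direct computation using the above representation, together with the identity $t/(t-s)=1+\xi$, reduces this requirement to $(1+\xi)\bigl(2H-\xi\,\Phi'(\xi)/\Phi(\xi)\bigr)=C$ for all $\xi>0$ and some constant $C\in\R$. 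The resulting ODE $(\log\Phi)'(\xi)=(2H-C)/\xi + C/(1+\xi)$ integrates to $\Phi(\xi)=A\,\xi^{2H-C}(1+\xi)^C$ for some $A>0$.

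Comparing this with $\Phi'(\xi)=\xi^{H-1/2}(1+\xi)^{H-1/2}$ (the derivative of the explicit $\Phi$) simplifies to
\[
(2H-C) + 2H\xi \;=\; A^{-1}\,\xi^{\,C-H+1/2}\,(1+\xi)^{\,H-C+1/2}.
\]
The left-hand side is a polynomial of degree one while the two exponents on the right sum to one, so the right-hand side is polynomial only if $C\in\{H-1/2,\,H+1/2\}$, yielding a scalar multiple of $(1+\xi)$ or $\xi$, respectively. In either case, matching constant and linear coefficients forces $H=1/2$, contradicting $H\in\R_+^*\setminus\{1/2\}$. Hence the Doob identity must fail at some triple $(\beta_1,\beta_2,\beta_3)$, providing the desired $\alpha\ne 0$. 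I anticipate this final coefficient-matching to be the most delicate step, since one must simultaneously track the admissibility constraint $\Phi(0)=0$ (equivalently $2H-C>0$), which rules out several borderline candidates for $C$ without disrupting the main contradiction.
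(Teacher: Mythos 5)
Your proof is correct, but the key step is carried out quite differently from the paper. Both arguments share the same first reduction: by Gaussian regression the two conditional laws are $N(\alpha,v_1)$ and $N(0,v_2)$, so it suffices to find a triple at which the regression coefficient $\alpha$ (equivalently, the Doob covariance factorization $c(\beta_1,\beta_3)c(\beta_2,\beta_2)=c(\beta_1,\beta_2)c(\beta_2,\beta_3)$) does not vanish; your explicit interval $I=(\alpha-\varepsilon,\alpha+\varepsilon)$ via comparison of densities at $\alpha$ is a constructive version of the paper's remark that distinct Borel probability measures are separated on intervals. Where you diverge is in proving $\alpha\neq 0$: the paper picks the concrete triple $(\tau^3,\tau^2,\tau)$, evaluates the covariances through the hypergeometric representation \eqref{eq:RLCovviaHypergeometric} and Gauss's summation theorem, and computes the small-time asymptotics $\alpha\sim\frac{4H(1-2H)}{(2H+1)^2}\tau^{1-2H}$, which is nonzero for small $\tau$ precisely when $H\neq\tfrac12$. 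You instead argue by contradiction globally: assuming the factorization holds for \emph{all} triples, the self-similar form $c(s,t)=(t-s)^{2H}\Phi(s/(t-s))$ turns $\partial_s\partial_t\log c=0$ into an ODE forcing $\Phi(\xi)=A\,\xi^{2H-C}(1+\xi)^C$, which is incompatible with $\Phi'(\xi)=\xi^{H-1/2}(1+\xi)^{H-1/2}$ unless $H=\tfrac12$ (and I checked your exponent bookkeeping; the case analysis $C\in\{H-\tfrac12,H+\tfrac12\}$, together with the admissibility constraint $\Phi(0)=0$, does close). Your route avoids special functions entirely and is arguably more elementary, but it is non-constructive (no explicit triple, no rate) and it loses the quantitative asymptotics \eqref{eq:RLcondExpAsymptotics} that the paper exploits afterwards to interpret the sign of the conditional mean in terms of increment reversion versus persistence; it is also closer in spirit to the covariance-factorization approach of Doob's criterion that the paper deliberately set out to bypass. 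Two minor points you should make explicit: the mixed-partial computation requires $\Phi\in C^\infty(\R_+^*)$ and $c>0$, both immediate from the integral representation, and the regression formula plus the density comparison implicitly use nondegeneracy of the Gaussian vector $(Y_{\beta_1},Y_{\beta_2},Y_{\beta_3})$, which holds since the functions $(\beta_i-\cdot)^{H-1/2}\1_{[0,\beta_i]}$ are linearly independent in $L^2$.
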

This observation already rules out the Markov property: If $Y$ were a Markov process, it would satisfy
\begin{equation}\label{eq:Y Markov}
    \mathbb{P}[Y_{\beta_3}\in I \mid Y_{\beta_2}, Y_{\beta_1}] = \mathbb{P}[Y_{\beta_3}\in I \mid Y_{\beta_2}] \quad \text{a.s.}
\end{equation}
Because $Y$ is a non-degenerate Gaussian process, the joint law of $(Y_{\beta_1}, Y_{\beta_2})$ has full support on $\mathbb{R}^2$. The probabilities in the equation above are given by regular conditional distributions, which are continuous functions of the conditioning variables (see \cite[p.~65]{Gl04}). Therefore, the strict pointwise inequality at $(Y_{\beta_2}, Y_{\beta_1}) = (0, 1)$ established in Lemma \ref{lemma:RLnonMarkov} extends to an open neighborhood of this point. Since this neighborhood has strictly positive probability mass, the almost sure equality~\eqref{eq:Y Markov} fails.
\begin{proof}[Proof of Lemma~\ref{lemma:RLnonMarkov}]
  Let us choose $(\beta_3,\beta_2,\beta_1) = (\tau,\tau^2,\tau^3)$ for some suitable $\tau\in(0,1)$ determined below. As~$Y$ is a centered Gaussian process, the conditional law $(Y_{\tau}\in \cdot \, |\,  Y_{\tau^2}=0)$ is also centered. We now prove that, for sufficiently small $\tau>0$, the conditional Gaussian law $(Y_{\tau}\in \cdot \, |\,  Y_{\tau^2}=0, Y_{\tau^3}=1)$ has a non-zero expectation, which implies the assertion. Indeed, for any two nondegenerate Gaussian distributions on $\R$ with distinct parameters, there exists an interval $I$ on which the density of the first is strictly larger than that of the second.
  
  For notational convenience, define $\theta_{ij}=\theta_{ij}(\tau):=\mathbb{E}[ Y_{\tau^i}Y_{\tau^j}]$ for $i,j\in\{1,2,3\}$. By the classical expression for the conditional
  Gaussian mean, we obtain
  \begin{align}
      \mathbb{E}[Y_\tau \,|\, Y_{\tau^2}=0, Y_{\tau^3}=1]
      &=(\theta_{12},\theta_{13})
      \left(
        \begin{matrix}
            \theta_{22} & \theta_{23} \\
            \theta_{32} & \theta_{33} 
        \end{matrix}
      \right)^{-1}
       \left(
        \begin{matrix}
            0\\
            1
        \end{matrix}
      \right) \notag \\
      &= \frac{\theta_{13}\theta_{22}-\theta_{12}\theta_{23}}{\theta_{22}\theta_{33}-\theta_{23}\theta_{32}}.\label{eq:cond frac}
  \end{align}
  In the following, we are going to use the well-known formulas
  \begin{equation}\label{eq:RLCovviaHypergeometric}
    \int_0^s \big((t-u)(s-u)\big)^{H-1/2}\,\mathrm{d}u
     =\frac{t^{H-1/2}s^{H+1/2}}{H+\tfrac12}\,
     {}_2F_1\bigg( \genfrac{}{}{0pt}{}{\tfrac12-H,1}{H+\tfrac32} \,\bigg|\, \frac{s}{t}\bigg),
\end{equation}
  for $s\leq t$, which can be derived from Euler's integral representation of the Gaussian hypergeometric function ${}_2F_1$ (see \cite[Theorem 2.2.1]{Andrews_Askey_Roy_1999}) and Pfaff's transformation formula (see \cite[Eq.~(2.2.6)]{Andrews_Askey_Roy_1999}), as done in e.g.\ \cite[Eq.~(13)]{StefanroughBergomi}, and
  \[
     {}_2F_1\bigg(  \genfrac{}{}{0pt}{}{\tfrac12-H,1}{H+\tfrac32} \,\bigg|\, 1\bigg)
     =\frac{\Gamma(H+\tfrac32)\hspace{0.02cm}\Gamma(2H)}{\Gamma(2H+1)\hspace{0.02cm}\Gamma(H+\tfrac12)}
     =\frac{H+\tfrac12}{2H},
  \]
  which follows from Gauss's summation theorem for ${}_2F_1$ (see \cite[Theorem~2.2.2]{Andrews_Askey_Roy_1999}). Thus, as $\tau\searrow 0$, we can conclude by combining the above with ${}_2F_1(\dots|\, x)\sim 1$ as $x\searrow 0$: 
  \begin{align*}
  \theta_{kk}(\tau)&\sim\frac{\tau^{2kH}}{2H}\hspace{0.2cm} \mbox{for}\ k\in\{1,2,3\},\quad \theta_{k,k+1}(\tau)\sim\frac{\tau^{(2k+1)H+1/2}}{H+\tfrac12}\hspace{0.2cm} \mbox{for}\ k\in\{1,2\},\\ 
  \mbox{and}\hspace{0.2cm} \theta_{13}(\tau)&\sim \frac{\tau^{4H+1}}{H+\tfrac12}.
  \end{align*}
  This easily implies $\theta_{23}\theta_{32} \ll\theta_{22}\theta_{33}$, and, therefore, we can compute the asymptotics
  of~\eqref{eq:cond frac}, which yields
  \begin{align}\label{eq:RLcondExpAsymptoticsdelta0}
      \frac{\theta_{13}\theta_{22}-\theta_{12}\theta_{23}}{\theta_{22}\theta_{33}-\theta_{23}\theta_{32}}
      &\sim
      \frac{\Big(\frac{1}{(H+1/2)(2H)}-\frac{1}{(H+1/2)^2}\Big)\tau^{8H+1}}{\tau^{10H}/(2H)^2}
      =\frac{4H(1-2H)}{(2H+1)^2}\,\tau^{1-2H}.
  \end{align}
  As $H\neq\tfrac12$, this is non-zero for sufficiently small~$\tau$, which completes the proof.
 \end{proof}
 
Since $Y$ reduces for $H=1/2$ to classical Brownian motion, the prototypical example for a Gaussian Markov process in continuous time, one cannot expect the above argument to work. Indeed, the right-hand side of \eqref{eq:RLcondExpAsymptoticsdelta0} becomes $0$ in this case.

The analogue of Lemma \ref{lemma:RLnonMarkov} also holds if we condition on $Y_{\beta_{1}}=\delta$ instead, where $\delta\neq 0$ is a fixed constant. Here, the conditional Gaussian mean is given by $\delta$ times the right-hand side of~\eqref{eq:cond frac}, whence we obtain from analogous steps 
 \begin{equation}\label{eq:RLcondExpAsymptotics}
     \mathbb{E}[Y_\tau \,|\, Y_{\tau^2}=0, Y_{\tau^3}=\delta]\sim \delta\,\frac{4H(1-2H)}{(2H+1)^2}\,\tau^{1-2H},
 \end{equation}
where $\delta$ may, in principle, also be chosen $\tau$-dependent. Investigating~\eqref{eq:RLcondExpAsymptotics} uncovers similarities
to the increment correlations of the closely related fractional Brownian motion. Conditioned on the increment~$-\delta$ on $[\tau^3, \tau^2]$, \eqref{eq:RLcondExpAsymptotics} essentially describes the asymptotics of the expectation of $Y_{\tau} - Y_{\tau^2}$, since $Y_{\tau^2}=0$. Consequently, for $H\in (0,1/2)$ we can expect an increment with the same sign as~$\delta$, i.e.\ a reversion of the past increment, whereas for $H>1/2$ the trend is expected to continue.

\section{Non-Markov property by reduction via a small-time CLT}\label{subsection: NonMarkovviaCLT}

In this section, we prove the failure of the Markov property for solutions of \eqref{eq:generalSVIE} with general, i.e.\ not necessarily affine, $g(\cdot\hspace{0.05cm};x)$ and Hölder continuous coefficients $b$ and $\sigma$. Our method is based on the reduction to the Gaussian case through a small-time CLT. Motivated by \cite[Theorem 2.3]{FGW24}, let us introduce the following set of assumptions:

\begin{assumption}\label{assumption:coefficientassumptions}
    We suppose $K_b=K_{\sigma}=:K$ and $d=m=1$. The coefficients $b \in C^{\chi_b}(\R)$ and $\sigma \in C^{\chi_{\sigma}}(\R)$ are Hölder continuous with exponents $\chi_{b}$, $\chi_{\sigma}\in (0,1]$. There exist constants $\gamma_* \geq \gamma > 0$ and $C, C^{*}>0$ such that the Volterra kernel $K \in L_{\mathrm{loc}}^2(\R_+)$ satisfies
    \begin{equation}\label{eq:kernelL2order}
        C^{*} \hspace{0.02cm}t^{2\gamma_*}\leq\int_0^t K(s)^2\, \mathrm{d}s \leq C \hspace{0.02cm}t^{2\gamma}, \qquad t \in (0,1].
    \end{equation}
    These constants satisfy the relation
    \begin{equation}\label{eq:varyinginitialCLTexponentcorridor}
        \gamma_* < \min\big\{\gamma + \tfrac{1}{2}, \gamma\hspace{0.05cm}(1+\chi_{\sigma})\big\},
    \end{equation}
    and there exists $\overline{K} \in L_{\mathrm{loc}}^2(\R_+)$ such that for all $0 \leq s \leq t$:
    \[
        \lim_{n \to \infty} \lambda(n)\int_0^{s/n} K( (t-s)/n + r) K(r)\, \mathrm{d}r
        = \int_0^s \overline{K}(t-s + r)\overline{K}(r)\,\mathrm{d}r,
    \]
    where the normalizing sequence $\lambda(n)$ is defined by
    \begin{align}\label{eq: lambda definition}
        \lambda(n) = \left(\int_0^{1/n}K(r)^2\, \mathrm{d}r\right)^{-1},\quad n\in\mathbb{N}.
    \end{align}
\end{assumption}

As Examples \ref{example:limitingkernelmorethanRL} and \ref{example:kernelssatisfying CLT} will show, this assumption is satisfied by a large class of kernels with a similar small-time behavior as the Riemann-Liouville kernel for some parameter $H\in\R_+^*$, e.g.\ gamma and log-modulated fractional kernels. The following is a straightforward extension of the small-time CLT derived in \cite[Theorem 2.3]{FGW24}.

\begin{theorem}\label{theorem: CLT}
    Suppose that Assumption \ref{assumption:coefficientassumptions} holds. For each $x \in \R$, let $g(\cdot\hspace{0.05cm};x): \R_+ \longrightarrow \R$ be a continuous curve with $g(0;x) = x$, such that their collection is locally bounded in $(t,x)$ and satisfies
    \begin{equation}\label{eq:curve_flatness}
        \lim_{n \to \infty} \sup_{|x| \leq R}\sqrt{\lambda(n)}\hspace{0.05cm}\big|g(t/n;x) - x\big| = 0, \qquad \text{for all } t \in \R_+, \ R > 0,
    \end{equation}
    and it holds that
    \begin{align}\label{eq: 9}
        \sup_{t \in (0,1]}\sup_{|x| \leq R}\ t^{-\gamma}|g(t;x) - x| < \infty, \qquad \text{for all }R > 0.
    \end{align}
    Suppose that for each $x \in \R$ there exists a continuous weak solution of
    \begin{equation}\label{eq:SVIEinCLT}
            X_{t} = g(t;x) + \int_0^t K(t-s)\hspace{0.02cm}b(X_s)\, \mathrm{d}s + \int_0^t K(t-s)\hspace{0.02cm}\sigma(X_s)\, \mathrm{d}B_s, \quad t\in\R_+.
    \end{equation} 
    For $(x_n)_{n \geq 1} \subset \R$, let $(\P_{x_n})_{n\ge 1}$ be a family of weak solutions to \eqref{eq:SVIEinCLT} on the canonical path space. If $x_n \longrightarrow x$, then
    \[
        \P_{x_n}\left[ \left(\sqrt{\lambda(n)}\left( X_{t_j/n} - x_n\right)\right)_{j = 1,\dots, N} \in \ \cdot\ \right] \Longrightarrow \P\left[ \big(Y_{t_j}\big)_{j=1,\dots, N} \in \ \cdot\ \right], \qquad n \to \infty,
    \]
    for any $N \geq 1$ and $0 < t_1 < \dots < t_N$, where $Y_t = \sigma(x)\int_0^t \overline{K}(t-s) \,\mathrm{d}B_s$, $t\in\R_+$, is a Gaussian process defined on some filtered space $(\Omega, \mathcal{F}, (\mathcal{F}_t)_{t \in\R_+}, \P)$ such that $B$ is a standard Brownian motion.
\end{theorem}
\begin{proof}
    In view of assumption \eqref{eq:curve_flatness} combined with
     \begin{align*}
        &\ \P_{x_n}\left[ \left(\sqrt{\lambda(n)}\left( X_{t_j/n} - x_n\right)\right)_{j = 1,\dots, N} \in \ \cdot\ \right] 
        \\ &\qquad = \P_{x_n}\left[ \left(\sqrt{\lambda(n)}\left( X_{t_j/n} - g(t_j/n; x_n)\right)\right)_{j = 1,\dots, N} + \left(\sqrt{\lambda(n)}\,\big( g(t_j/n; x_n) - x_n\big)\right)_{j = 1,\dots, N} \in \ \cdot\ \right],
    \end{align*}
    and Slutsky's theorem, it suffices to prove that 
    \[
        \P_{x_n}\left[ \left(\sqrt{\lambda(n)}\left( X_{t_j/n} - g(t_j/n; x_n)\right)\right)_{j = 1,\dots, N} \in \ \cdot\ \right]  \Longrightarrow \P\left[ \big(Y_{t_j}\big)_{j=1,\dots, N} \in \ \cdot\ \right], \qquad n \to \infty.
    \]
    Following along the lines of \cite[Theorem 2.3]{FGW24}, it suffices to establish an analogue of \cite[Proposition~2.2]{FGW24} for \eqref{eq:SVIEinCLT}, i.e.
    \[
        \E_{x}\left[ |Z_{t}|^p \right] \leq \overline{C}_p(x) \left( t^{\frac{p}{2} + p \gamma(1+\chi_b)} + t^{p\gamma(1+\chi_{\sigma})}\right), \qquad p \geq \max\{2/\chi_b, \ 2/\chi_{\sigma}\},
    \]
    where $\overline{C}_p(x)> 0$ is locally bounded in $x$ and $Z$ is given by
    \[
        Z_t = \int_0^t K(t-s)(b(X_s) - b(x))\, \mathrm{d}s + \int_0^t K(t-s)(\sigma(X_s) - \sigma(x))\, \mathrm{d}B_s, \quad t\in\R_+.
    \]
    Such an inequality follows similarly to \cite[Proposition 2.2]{FGW24}, once we have replaced inequality (2.5) therein by 
    \[
        \E_{x}[|X_t - x|^p] \le C_p(x) \hspace{0.02cm}t^{p \gamma}, \qquad t \in [0,1],
    \]    
    where $C_p(x)> 0$ is locally bounded in $x$. The latter follows from $\E_{x}[|X_t - x|^p] \lesssim \E_{x}[|X_t - g(t;x)|^p] + |g(t;x) - x|^p$ combined with assumption \eqref{eq: 9} to bound the second term, while the first term is bounded by \cite[inequality (A.2)]{FGW24} and the local boundedness of $g$ in $(t,x)$. This proves the assertion.
\end{proof}

Let us remark that conditions \eqref{eq:curve_flatness} and \eqref{eq: 9} are satisfied for $g(t;x) = x + t^{\beta}h(x)$ provided that $h$ is locally bounded and $\gamma_* < \beta$. The latter includes $g(t;x) = x$ as a particular case. The following is our main result for this section.

\begin{theorem}\label{theorem:NonMarkovSVIE}
    Suppose that Assumption \ref{assumption:coefficientassumptions} holds. For each $x \in \R$, let $g(\cdot\hspace{0.05cm};x): \R_+ \longrightarrow \R$ be a continuous curve with $g(0;x) = x$, such that their collection is locally bounded in $(t,x)$ and satisfies~\eqref{eq:curve_flatness} and \eqref{eq: 9}. Assume that there exists $x_0 \in \R$ such that $\sigma(x_0) \neq 0$, and that for each $x \in \R$ there exists a continuous weak solution of \eqref{eq:SVIEinCLT} with law $\P_x$ on the canonical path space. If 
    \begin{align}\label{eq: CLT limit}
        Y_t = \int_0^t \overline{K}(t-s) \,\mathrm{d}B_s, \quad t\in\R_+,
    \end{align}
    is not a time-homogeneous Markov process, then also the family of laws $(\P_x)_{x \in \R}$ fails the time-homogeneous Markov property \eqref{eq: Markov timehomogeneous}. 
\end{theorem}
\begin{proof}
    We argue via contradiction. Therefore, let us assume that \eqref{eq:SVIEinCLT} has the time-homogeneous Markov property, i.e.\ $(\P_x)_{x \in \R}$ satisfies~\eqref{eq: Markov timehomogeneous}.  Denote by $p_t(x,A) = \P_x[ X_t \in A ]$, where $t\in\R_+^*$, $x\in\R$ and $A \in\mathcal{B}(\R)$, the associated family of Markov transition probabilities and note that $(p_t)_{t\in\R_+^*}$ defines a family of stochastic transition kernels on $\R\times\mathcal{B}(\R)$, as argued in the introduction.
    
    \textit{Step 1.} Define for every $n\in\mathbb{N}$ a transformation $h_n:\R\longrightarrow\R$ with inverse $h_n^{-1}$ by
    \[
        h_n(y) = \sqrt{\lambda(n)}\hspace{0.02cm}(y - x_0) \ \text{ and } \ h_n^{-1}(x) = x_0 + \lambda(n)^{-1/2}\hspace{0.02cm}x.
    \]
    Since $(\P_x)_{x \in \R}$ fulfills by assumption the Markov property \eqref{eq: Markov timehomogeneous} and $h_n$ is bijective and affine, it follows that also the transformed process $h_n(X_{\cdot/n})$ is under $\mathbb{P}_{x_0}$ a Markov process with respect to its natural filtration for every $n\in\mathbb{N}$, see also \eqref{eq: 2} below. Its transition probabilities are given by 
    \[
        p_t^{(n)}(x,A) := \P_{h_n^{-1}(x)}\big[ h_n(X_{t/n}) \in A\big] = p_{t/n}\big(h_n^{-1}(x), h_n^{-1}(A)\big).
    \] 
    For each $z \in \R$, let us denote by $Y^{\sigma, z}$ the Gaussian process on $(\Omega, \mathcal{F}, (\mathcal{F}_t)_{t \in \R_+}, \P)$ defined by
    \begin{align}\label{eq:RLstartvaluedependence}
        Y_{t}^{\sigma, z}:= z + \sigma(x_{0})\int_{0}^{t}\overline{K}(t-s)\,\mathrm{d}B_s,\quad t\in\R_+.
    \end{align}
    By an application of the small-time CLT provided in Theorem \ref{theorem: CLT}, we obtain
    \begin{align}\label{eq: convergence fdd x0}
        \P_{x_0}\big[ h_n\big(X_{\cdot/n}\big) \in \ \cdot \ \big] \Longrightarrow \P \big[Y^{\sigma, 0}\in \ \cdot \ \big]
    \end{align}

    for their finite-dimensional distributions. Set $q_t(x,\mathrm{d}y) = \P[ Y_t^{\sigma, x} \in \mathrm{d}y]$. L\'evy's continuity theorem implies that these kernels are weakly continuous. Moreover, it follows from the Gaussianity in \eqref{eq:RLstartvaluedependence} that $q_t(x,\mathrm{d}y) = \delta_x \ast q_t(0, \mathrm{d}y)$ and hence $x \longmapsto \int_{\R} f(y) \,q_t(x, \mathrm{d}y) = \int_{\R}f(x+y)\, q_t(0, \mathrm{d}y)$ is bounded and Lipschitz continuous whenever $f$ is bounded and Lipschitz continuous. In the subsequent steps, we show that $(q_t)_{t\in\R_+^*}$ forms a family of Markov transition kernels on $\R\times\mathcal{B}(\R)$, and that the finite-dimensional distributions of $Y^{\sigma, 0}$ are necessarily given via the Chapman-Kolmogorov equations determined by the kernels $(q_t)_{t\in\R_+^*}$. In particular, since the finite-dimensional distributions uniquely determine the continuous process $Y^{\sigma, 0}$, it necessarily has to be a Markov process. However, due to $\sigma(x_0)\neq 0$, this contradicts the assumed non-Markovianity of $Y=\overline{K}\ast \mathrm{d}B$, which will eventually prove our assertion.

    \textit{Step 2.} Let us first show that for all $t, M > 0$ and each bounded and Lipschitz continuous function $f$ it holds
    \begin{align}\label{eq: N1 convergence}
        \lim_{n \to \infty}\sup_{|x| \leq M}\left| \int_{\R}f(y)\, p_t^{(n)}\big( x ,\mathrm{d}y\big) - \int_{\R}f(y)\, q_t(x,\mathrm{d}y)\right| = 0.
    \end{align}
    Let $F_{f,n}(x) = \int_{\R}f(y)\, p_t^{(n)}\big( x ,\mathrm{d}y\big) - \int_{\R}f(y)\, q_t(x,\mathrm{d}y)$. Since $|F_{f,n}| \leq 2 \|f\|_{\infty}$, we find for each $n \geq 1$ some $|z_n| \leq M$ such that
    \[
      \big|F_{f,n}(z_n)\big| > \sup_{|x| \leq M}\big|F_{f,n}(x)\big| - \frac{1}{n}.
    \]
    Therefore, \eqref{eq: N1 convergence} is implied by
    \begin{equation}\label{eq: N1 convergence 2}
        \lim_{n\to\infty} \big|F_{f,n}(z_{n})\big|=0.
    \end{equation}
    Since $\big(|F_{f,n}(z_{n})|\big)_{n\in\mathbb{N}}$ is a bounded sequence, it is sufficient for the latter to prove that every convergent subsequence with indices $(n_{k})_{k\in\mathbb{N}}$ has limit $0$. Without loss of generality, we may assume that also $(z_{n_{k}})_{k\in\mathbb{N}}$ converges to some $z \in [-M,M]$, because, due to the compactness of $[-M,M]$, one could otherwise pass to a convergent subsequence $\big(z_{n_{k_{l}}}\big)_{l\in\mathbb{N}}$, whose image converges to the same limit as that of the original subsequence. For this choice, we can estimate
    \begin{equation*}
    \begin{aligned}
        \lim_{k\to\infty} \big|F_{f,n_{k}}(z_{n_{k}})\big|\le &\lim_{k\to\infty} \left| \int_{\R}f(y)\, p_t^{(n_{k})}\big( z_{n_{k}},\mathrm{d}y\big) -\int_{\R}f(y)\, q_t(z,\mathrm{d}y)\right| \\
        &+ \lim_{k\to\infty} \left|\int_{\R}f(y)\, q_t(z,\mathrm{d}y) - \int_{\R}f(y)\, q_t(z_{n_{k}},\mathrm{d}y)\right|.
    \end{aligned}
    \end{equation*}
    Observe that the second summand is zero due to the weak continuity of the kernel~$q_{t}$ as shown at the end of Step 1. For the first part, note that $h_{n_k}(X_{t/n_k}) = z_{n_k} + \sqrt{\lambda(n_k)}\hspace{0.02cm}\big(X_{t/n_k} - h^{-1}_{n_k}(z_{n_k})\big)$. Hence, we may write for a bounded, globally Lipschitz continuous function $f$: 
    \begin{align*}
        \int_{\R}f(y)\, p_t^{(n_{k})}\big( z_{n_{k}},\mathrm{d}y\big)
        &= \E_{h_{n_k}^{-1}(z_{n_k})}\big[ f(h_{n_k}(X_{t/n_k})) \big]
        \\ &= \E_{h_{n_k}^{-1}(z_{n_k})}\left[ f\Big( z + \sqrt{\lambda(n_k)}\hspace{0.02cm}\big(X_{t/n_k} - h_{n_k}^{-1}(z_{n_k})\big)\Big) \right] 
        \\ &\qquad + \E_{h_{n_k}^{-1}(z_{n_k})}\left[ f\Big( z_{n_k} + \sqrt{\lambda(n_k)}\hspace{0.02cm}(X_{t/n_k} - h_{n_k}^{-1}(z_{n_k})\big)\Big) \right] 
        \\ &\qquad - \E_{h_{n_k}^{-1}(z_{n_k})}\left[ f\Big( z + \sqrt{\lambda(n_k)}\hspace{0.02cm}\big(X_{t/n_k} - h_{n_k}^{-1}(z_{n_k})\big)\Big) \right]. 
    \end{align*}
    For the first term, let us note that $h_{n_k}^{-1}(z_{n_k})\longrightarrow x_0$ since $(z_{n_k})_{k\in\mathbb{N}}$ is a bounded sequence and $\lambda(n_k)^{-1/2} \longrightarrow 0$. Thus, an application of the small-time CLT given in Theorem \ref{theorem: CLT} yields
    \[
        \lim_{k \to \infty} \E_{h_{n_k}^{-1}(z_{n_k})}\left[ f\Big( z + \sqrt{\lambda(n_k)}\hspace{0.02cm}\big(X_{t/n_k} - h_{n_k}^{-1}(z_{n_k})\big)\Big) \right] = \E\big[f\big(z + Y_t^{\sigma,0}\big)\big]
        = \int_{\R}f(y)\, q_t(z,\mathrm{d}y).
    \]
     The remaining terms converge to zero since $f$ is Lipschitz continuous and $z_{n_k} \longrightarrow z$. Thus we have shown that $\lim_{k\to\infty} \big|F_{f,n_{k}}(z_{n_{k}})\big| = 0$, which proves \eqref{eq: N1 convergence 2} and hence also \eqref{eq: N1 convergence}. 
    
    \textit{Step 3.} Let $N \geq 1$, $f_1,\dots, f_N: \R \longrightarrow \R$ be bounded and Lipschitz, and $0=t_0 < t_1 < \dots < t_N$. Then, by the convergence of the finite-dimensional distributions shown in \eqref{eq: convergence fdd x0}, we obtain
    \begin{equation}\label{eq:NonMarkovCLTfindimdistributions}
        \E_{x_0}\big[ f_1\big( h_n(X_{t_1/n})\big) \cdots f_N\big( h_n(X_{t_N/n})\big) \big] \longrightarrow \E\big[ f_1\big(Y^{\sigma, 0}_{t_1}\big) \cdots f_N\big(Y^{\sigma, 0}_{t_N}\big)\big],
    \end{equation}
    i.e.\ the finite-dimensional distributions of $Y^{\sigma, 0}$ are determined as the limit of the left-hand side. Let us now show that we also have 
    \begin{align}\label{eq: 1cltMarkov}
        I_{n}:=\E_{x_0}\left[ f_1\big( h_n(X_{t_1/n})\big)\cdots f_N\big( h_n( X_{t_N/n})\big)\right] \longrightarrow I,
    \end{align}
    where the limit $I$ is given by
    \[
        I = \int_{\R}\cdots \int_{\R} f_1(x_1)\cdots f_N(x_N) \,q_{t_{N} - t_{N-1}}(x_{N-1}, \mathrm{d}x_N)\cdots q_{t_1-t_0}(0, \mathrm{d}x_1).
    \] 
    For $N = 1$, this assertion follows from \eqref{eq: convergence fdd x0}. Therefore, let us suppose that $N \geq 2$, and define for $j \in\{2, \dots, N\}$ the measures
    \begin{align*}
        P_j^{(n)}(\mathrm{d}x_{j}, \dots, \mathrm{d}x_2, x_1) 
        &= p^{(n)}_{t_{j} - t_{j-1}}\big( x_{j-1}, \mathrm{d}x_{j}\big)\cdots p_{t_2 - t_1}^{(n)}\big( x_1, \mathrm{d}x_2\big).
    \end{align*}
    An application of \cite[Proposition III.1.4]{revuzyor} for the, with respect to its natural filtration, time-homogeneous Markov process $h_n(X_{\cdot/n})$ under $\mathbb{P}_{x_0}$ whose transition kernels are given by $p_{t}^{(n)}$, $t\in\R_+^*$, yields for $I_n$:
    \begin{align}\label{eq: 2}
        &\E_{x_0}\big[ f_1\big( h_n(X_{t_1/n})\big)\cdots f_N\big( h_n(X_{t_N/n})\big)\big] 
        \\ &\quad = \int_{\R}\cdots \int_{\R} f_1(x_1)\cdots f_N(x_N)\, P_N^{(n)}(\mathrm{d}x_N, \dots, \mathrm{d}x_2, x_1) \,p^{(n)}_{t_1}(0, \mathrm{d}x_1). \notag
    \end{align} 
    For arbitrary $N \geq 2$, we can derive the estimate
    \[
        \left| \E_{x_0}\big[ f_1\big( h_n(X_{t_1/n})\big)\cdots f_N\big( h_n(X_{t_N/n})\big)\big]  - I\right| \leq \sum_{j=1}^N J_j^{(n)}
    \]
    by inductively replacing the transition kernel in each dimension step-by-step and finally applying the triangle inequality, where the summands $J_j^{(n)}$ are given by
    \begin{align*}
        J_j^{(n)} &= \bigg|\int_{\R^{j-1}}\int_{\R} Q_j(x_j)  \left(p^{(n)}_{t_j - t_{j-1}}\big( x_{j-1}, \mathrm{d}x_j\big) - q_{t_j - t_{j-1}}(x_{j-1},\mathrm{d}x_j)\right)
        \\ &\hskip22mm \cdot \bigg( \prod_{k=1}^{j-1}f_{k}(x_k)\bigg) P_{j-1}^{(n)}(\mathrm{d}x_{j-1}, \dots \mathrm{d}x_2, x_1) \hspace{0.03cm}p^{(n)}_{t_1}(0, \mathrm{d}x_1) \bigg|
    \end{align*}
    when $j \in\{ 2, \dots, N\}$, and for $j = 1$ we define
    \[
        J_1^{(n)} = \left| \int_{\R} Q_1(x_1)\left( p_{t_1}^{(n)}(0, \mathrm{d}x_1) - q_{t_1}(0, \mathrm{d}x_1)\right) \right|.
    \]   
    Here, the functions $Q_j$ are defined by $Q_N=f_N$ and for $j\in\{1, \dots, N-1\}$ by
    \begin{align*}
        Q_{j}(x_j) &= \int_{\R^{N-j}} \bigg(\prod_{k=j}^N f_k(x_k) \bigg)\hspace{0.02cm} q_{t_{N} - t_{N-1}}(x_{N-1}, \mathrm{d}x_N)\cdots q_{t_{j+1} - t_j}(x_j, \mathrm{d}x_{j+1}).
    \end{align*}
    Since by Step 1, integrals with respect to $q_t(x,\cdot)$ are as functions in $x$ stable for bounded Lipschitz functions and every $f_{k}, k\in\{1,\dots,N\}$, is bounded and Lipschitz continuous, one can inductively show that also $Q_j$ is bounded and Lipschitz continuous. Moreover, for each fixed $j \in\{ 2, \dots, N\}$ we can choose for any $\varepsilon > 0$ a large enough constant $M > 0$ such that $p^{(n)}_{t_{j-1}}(0, [-M,M]^c)<\varepsilon$ holds uniformly in $n\ge 1$. This is indeed possible since $p_t^{(n)}( x, \cdot) \Longrightarrow q_t(x, \cdot)$ holds for all $x\in\R$ and $t>0$ by \eqref{eq: N1 convergence}, whence this sequence of measures is tight. Therefore, we have   
    \begin{align}\label{eq: N1 tightness}
        \lim_{M \to \infty}\sup_{n \geq 1}\ p^{(n)}_t\big( x, [-M,M]^c\big) = 0,
    \end{align}
    in particular for the choice $x=0$ and $t=t_{j-1}$, which implies that we can select $M>0$ as claimed before. For this $M$ we can choose $n\ge 1$ large enough such that
    \begin{displaymath}
        \sup_{|x_{j-1}| \leq M}\left| \int_{\R}Q_{j}(x_{j})\left(p_{t_{j}-t_{j-1}}^{(n)}\big( x_{j-1},\mathrm{d}x_{j}\big) - 
        q_{t_{j}-t_{j-1}}(x_{j-1},\mathrm{d}x_{j})\right)\right|<\varepsilon
    \end{displaymath}
    holds by \eqref{eq: N1 convergence}. Hence, by combining this with the boundedness of $Q_{j}$ and $f_{k}, k\in\{1,\dots,N\},$ as well as the Chapman-Kolmogorov equations for the transition kernels $\big(p^{(n)}_t\big)_{t\in\R_+^*}$ associated with the time-homogeneous Markov process $h_n( X_{\cdot/n})$, we can estimate
    \begin{equation*}
    \begin{aligned}
        J_j^{(n)} &\leq C_{1}\int_{\R^{j-1}}\sup_{x_{j-1}\in\mathbb{R}}\bigg|\int_{\R} Q_j(x_j)  \left(p^{(n)}_{t_j - t_{j-1}}\big( x_{j-1}, \mathrm{d}x_j\big) - q_{t_j - t_{j-1}}(x_{j-1},\mathrm{d}x_j)\right)\\
        &\hskip40mm \cdot\1_{[-M,M]}(x_{j-1})\bigg|\, P_{j-1}^{(n)}(\mathrm{d}x_{j-1}, \dots \mathrm{d}x_2, x_1)\, p^{(n)}_{t_1}(0, \mathrm{d}x_1) \\
        &\hskip4.5mm+ C_{2}\int_{\R^{j-1}} \1_{[-M,M]^c}(x_{j-1}) \,P_{j-1}^{(n)}(\mathrm{d}x_{j-1}, \dots \mathrm{d}x_2, x_1) \,p^{(n)}_{t_1}(0, \mathrm{d}x_1)\\
        &\leq C_{3}\Big(\varepsilon\, p^{(n)}_{t_{j-1}}(0, \mathbb{R})+\sup_{n\ge 1}p^{(n)}_{t_{j-1}}\big(0, [-M,M]^c\big)\Big)\\
        &= C_{4}\,\varepsilon,
    \end{aligned}
    \end{equation*}
     where $C_i$, $i\in\{1,\dots,4\}$, denote suitable positive constants independent of $\varepsilon$ and $n$. Therefore, we can conclude $J_j^{(n)} \longrightarrow 0$ for every $j \in\{ 2, \dots, N\}$. Finally, when $j=1$, the desired convergence $J_1^{(n)} \longrightarrow 0$ follows directly from the weak convergence observed in \eqref{eq: N1 convergence}. This proves the conjectured convergence $I_n \longrightarrow I$ as $n\to\infty$.
     
     \textit{Step 4.} Let us now show that $(q_t)_{t\in\R_+^*}$ corresponds to a consistent family of Markov transition kernels, i.e.\ it fulfills the Chapman-Kolmogorov equations. Firstly, consider a bounded Lipschitz function $f: \R \longrightarrow \R$, and $0 < s < t$. Then we obtain from \eqref{eq: 1cltMarkov},~\eqref{eq: 2}, and $\big(p_t^{(n)}\big)_{t\in\R_+^*}$ being Markov transition kernels:
     \begin{align}\label{eq: 7}
         \int_{\R} \left(\int_{\R} f(x_2) \,q_{t-s}(x_1,\mathrm{d}x_2) \right)q_{s}(0, \mathrm{d}x_1) \notag
         &= \lim_{n \to \infty} \int_{\R} \int_{\R} \1_{\R}(x_1)\hspace{0.02cm}f(x_2) \hspace{0.02cm}p^{(n)}_{t-s}(x_1,\mathrm{d}x_2) \hspace{0.02cm}p_{s}^{(n)}(0, \mathrm{d}x_1) \notag
         \\ &= \lim_{n \to \infty} \int_{\R} f(x_2) \, p_{t}^{(n)}(0, \mathrm{d}x_2)
         \\ \notag &= \int_{\R} f(x_2) \,q_{t}(0, \mathrm{d}x_2).
     \end{align}
     As we have so far only considered the initial law $\delta_{0}$, let us extend this observation to general starting points $z \in \R$. First, note that by \eqref{eq:RLstartvaluedependence}, we have $q_t(z, \cdot) = q_t(0,\cdot) \ast \delta_z$, and hence also $q_t(x + z,\cdot) = q_t(x,\cdot) \ast \delta_z$. Using these relations, we obtain
     \begin{align*}
         \int_{\R} \left(\int_{\R} f(x_2) \,q_{t-s}(x_1,\mathrm{d}x_2) \right)\,q_{s}(z, \mathrm{d}x_1)
         &= \int_{\R} \int_{\R} \int_{\R} f(x_2) \,q_{t-s}(x_1 + y,\mathrm{d}x_2) \,q_{s}(0, \mathrm{d}x_1)\,\delta_z(\mathrm{d}y)
         \\ &= \int_{\R} \int_{\R} f(x_2) \,q_{t-s}(x_1 + z,\mathrm{d}x_2) \,q_{s}(0, \mathrm{d}x_1)
         \displaybreak[1]\\ &= \int_{\R} \int_{\R} f(x_2+z) \,q_{t-s}(x_1,\mathrm{d}x_2) \,q_{s}(0, \mathrm{d}x_1)
         \\ &= \int_{\R} f(x_2+z) \,q_{t}(0,\mathrm{d}x_2)
         \\ &= \int_{\R} f(x_2) \,q_{t}(z,\mathrm{d}x_2),
     \end{align*}
     where the second-to-last step follows from the previous computation \eqref{eq: 7}. Since $f$ is an arbitrary bounded and Lipschitz continuous function, one may show by a standard approximation procedure of indicator functions by bounded Lipschitz functions that we arrive at the relation $\int_{\R}q_{t-s}(x_1,\cdot)\, q_s(z, \mathrm{d}x_1) = q_{t}(z,\cdot)$, which shows that $(q_t)_{t\in\R_+^*}$ is a family of consistent Markov transition kernels by \cite[Definition III.1.2]{revuzyor}. Combining this with $Y^{\sigma, 0}_0=0$ and 
     \begin{displaymath}
         \E\left[ f_1\big(Y^{\sigma, 0}_{t_1}\big) \cdots f_N\big(Y^{\sigma, 0}_{t_N}\big)\right]=\int_{\R}\cdots \int_{\R} f_1(x_1)\cdots f_N(x_N) \,q_{t_{N} - t_{N-1}}(x_{N-1}, \mathrm{d}x_N)\cdots q_{t_1}(0, \mathrm{d}x_1),
     \end{displaymath}
     according to Step 3, which generalizes again by classical approximation arguments to nonnegative Borel measurable functions $f_1,\dots,f_N$, shows via \cite[Proposition III.1.4]{revuzyor} that $Y^{\sigma,0}=\sigma(x_0)\big(\overline{K}\ast\mathrm{d}B\big)$ is a Markov process with its time-homogeneous transition family given by $(q_t)_{t\in\R_+^*}$. By $\sigma(x_0)\neq 0$, this contradicts the assumed non-Markovianity of $Y = \overline{K}\ast\mathrm{d}B$. Hence, the proof is completed.
\end{proof}

We proceed by providing several examples of kernel families which are covered by the above framework. In both cases, let us suppose that we are given an admissible $g$, $b \in C^{\chi_b}(\R)$ and $\sigma \in C^{\chi_{\sigma}}(\R)$ with $\chi_b, \chi_{\sigma} \in (0,1]$, so that we may focus entirely on the Volterra kernel. Our first example covers kernels that exhibit an equivalent small-time behavior as classical Riemann-Liouville kernels, see also \cite[Example 2.9]{FGW24} for additional details. 

\begin{example}\label{example:limitingkernelmorethanRL}
    Suppose that for $K\in L_{\mathrm{loc}}^2(\R_+)$ there exist constants $H, C(H) >0$ such that
    \[
       K(t) \sim C(H)\hspace{0.03cm}t^{H-1/2}, \quad \  \mbox{as} \ t \searrow 0.
    \]
    Then we obtain $\gamma = \gamma_* = H$, $\lambda(n) \sim 2H\hspace{0.03cm}C(H)^{-2}\hspace{0.03cm}n^{2H}$, and $\overline{K}(t) = \sqrt{2H}\hspace{0.03cm}t^{H - 1/2}$ follows from the dominated convergence theorem. Therefore, Assumption \ref{assumption:coefficientassumptions} is satisfied. It follows from Lemma~\ref{lemma:RLnonMarkov} that $Y = \overline{K} \ast \mathrm{d}B$ does not satisfy the Markov property for $H \neq 1/2$, whence Theorem \ref{theorem:NonMarkovSVIE} is applicable.
    
    However, note that by taking $H = 1/2$ this class also covers regular kernels $K \in C^1(\R_+)$ with $K(0)\in~\hspace{-0.08cm}\R_+^*$. For all such regular kernels, we obtain $\overline{K}\equiv 1$. However, it is clear that $Y = \overline{K} \ast \mathrm{d}B = B$ is a time-homogeneous Markov process, whence Theorem \ref{theorem:NonMarkovSVIE} is not applicable for $H = 1/2$. 
\end{example}

Our second example covers kernels that are not asymptotically equivalent to the Riemann-Liouville kernel in the small-time regime.

\begin{example}\label{example:kernelssatisfying CLT}
    The $\log$-modulated fractional kernel with parameter $H>0$ defined by
    \[
            K(t) = \frac{t^{H - 1/2}}{\Gamma(H+1/2)}\log(1 + 1/t), \quad t\in\R_+^*,
    \]
    satisfies \eqref{eq:kernelL2order} with $\gamma_*=H$ and arbitrary $\gamma \in (0,H)$. Moreover, a short computation and an application of Karamata's theorem (see \cite[Proposition~1.5.10]{BiGoTe87}) shows that $\overline{K}(t)=\sqrt{2H}\hspace{0.02cm}t^{H-1/2}$, see \cite[Example 2.10]{FGW24} for details. Hence, Assumption \ref{assumption:coefficientassumptions} is satisfied, and Theorem \ref{theorem:NonMarkovSVIE} is applicable whenever $H\neq 1/2$. 
\end{example}

 Although the framework of this section permits considerable generality in the coefficients, in particular the drift, the lower bound in \eqref{eq:kernelL2order} should be regarded as an additional condition beyond the standard assumptions for weak existence. While this bound is satisfied by the important kernels considered above, it fails for certain more exotic kernels exhibiting exponential decay in the small-time regime. For example, $K(t)=\frac{1}{t}\hspace{0.02cm}\exp(-\frac{1}{2t})$ cannot be bounded from below in $L^2$ by a power function since $\int_0^t K(s)^2 \,\mathrm{d}s= \exp(-\frac{1}{t})$. Still, as the kernel is locally Lipschitz on $\R_+$, weak existence to \eqref{eq:generalSVIE} with $K_b=K_{\sigma}=K$ can be established by \cite[Theorem 1.2 and Theorem 6.1]{weak_solution}, and thus the failure of the Markov property~\eqref{eq: Markov timehomogeneous} can be shown via the results from Section \ref{subsection:newmomentformula} for affine $b$ and $g$ of the form \eqref{eq: g affine}.
 
Finally, note that since our proof of Theorem \ref{theorem:NonMarkovSVIE} relies on the small-time CLT derived in \cite{FGW24} for the initial time $t = 0$, we are necessarily restricted to the time-homogeneous case. An extension towards proving also the failure of the time-inhomogeneous Markov property would require an analogue of \cite[Theorem 2.3]{FGW24} for a (conditional) small-time CLT at arbitrary time $t\in\R_+$.

Beyond this restriction to the time-homogeneous Markov property, the small-time CLT approach employed here provides only a \emph{sufficient} condition for non-Markovianity. A broader class of sufficient conditions ensuring the failure of even the time-inhomogeneous Markov property, covering multidimensional kernels $K_b$ and $K_{\sigma}$ as well as $\mathcal{F}_0$-measurable inputs $g$, is established in~\cite{FGW25} using perturbation methods through Hilbert space-valued Markovian lifts. The focus of the present paper is instead on \emph{characterizing} non-Markovianity within affine frameworks and clarifying its connection with the non-Markovian behavior of Gaussian Volterra processes in the small-time regime.

\vspace{0.3cm}
\noindent{\bf Acknowledgement.} We thank Mathias Beiglb\"ock and Paul Kr\"uhner for helpful comments.

\begin{footnotesize}
\bibliographystyle{siam}
\bibliography{literature}
\end{footnotesize}

\end{document}